\newcommand{\reals}{\mathbb{R}}
\newcommand{\twonorm}[1]{\left\| #1 \right\|}
\renewcommand{\paragraph}{%
  \@startsection{paragraph}{4} {\z@}
  {1ex \@plus 1ex \@minus .2ex}
  {-1ex}%
  {\bfseries}%
}
\begin{document}

\title{Linear Convergence of Cyclic SAGA%
\thanks{
Ernest Ryu was supported in part by NSF grant DMS-1720237 and ONR grant N000141712162.
}
}


\author{
Youngsuk Park \and Ernest K. Ryu
}


\institute{Y. Park \at
              Department of Electrical Engineering, Stanford University, \\
              \email{youngsuk@stanford.edu}    
           \and
           E. K. Ryu \at
           Department of Mathematical Sciences, Seoul National University\\
              \email{ernestryu@snu.ac.kr}  
}

\date{Received: date / Accepted: date}

\maketitle

\begin{abstract}
In this work, we present and analyze C-SAGA, a (deterministic) cyclic variant of SAGA. C-SAGA is an incremental gradient method that minimizes a sum of differentiable convex functions by cyclically accessing their gradients. Even though the theory of stochastic algorithms is more mature than that of cyclic counterparts in general, practitioners often prefer cyclic algorithms. We prove C-SAGA converges linearly under the standard assumptions. Then, we compare the rate of convergence with the full gradient method, (stochastic) SAGA, and incremental aggregated gradient (IAG), theoretically and experimentally. 
\keywords{Cyclic updates \and SAGA \and IAG \and Incremental methods \and Just-in-time update \and Linear convergence}
\end{abstract}
\section{Introduction}

Consider the optimization problem
\begin{equation}
\underset{x\in \mathbb{R}^d}{\text{minimize}}
\quad f(x):=\frac{1}{n}\sum^n_{i=1}f_i(x).
\tag{P}
\label{eq:mainP}
\end{equation}
where $f_i: \reals^d \rightarrow \reals$ for $i=1,\dots,n$ are 
convex and differentiable.
This finite sum structure commonly appears in machine learning problems minimizing empirical risk.
When $n$, the number of components,  is small, we can solve \eqref{eq:mainP} with the classical gradient method, which computes (full) gradients of $f$ every iteration.
When $n$ is large, however, the cost of evaluating a full gradient even once can be very expensive, and algorithms with lower per-iteration cost become appealing.

Recently, the class of incremental methods that solve  \eqref{eq:mainP} by computing the gradient of only one $f_i$ at each iteration has received much attention.
Such methods include stochastic gradient methods \cite{robbins:1951,shor1962,nemirovsky1983,shor1985,polyak1987,kushner2003}, its variance reduced variants \cite{leroux2012,mairal2013,johnson2013,zhang2013,shalev2103,defazio2014b:saga,defazio2014:finito,nitanda2014,xiao2014,mairal2015,lan2015,schmidt2016,shalev2016,shalev2016b}, and deterministic incremental methods \cite{bertsekas:2011,bertsekas2015,Wang2015,mokhtari2016:c-finito,gurbuzbalaban2016nips,gurbuzbalaban2016,gurbuzbalaban2016stronger,guo2017,parrilo2017:iag}.
Among these methods, variance reduced gradient methods such as SAGA \cite{defazio2014b:saga} achieve a faster rate of convergence compared to classical gradient descent under certain assumptions.

Stochastic incremental methods randomly access one $\nabla f_i$ at each iteration.
The randomness is essential to the theoretical analysis, and the theory for such stochastic incremental methods have significantly matured in the past five years.
In contrast, deterministic incremental methods deterministically choose one $\nabla f_i$ at each iteration, and their theoretical analysis is much weaker than that of the stochastic incremental methods even though they often perform well empirically.


Nevertheless, many practitioners prefer cyclic methods, which access the components (deterministically) cyclically. 
Iterations of cyclic methods can be faster due to systemic reasons such as cache locality, and it is often important to guarantee all components are accessed once within each epoch  (a pass through $n$ components) especially when only a few epochs are used.
Whether practitioners should use cyclic methods is an interesting question in its own right.
Regardless, analyzing the theoretical strength or weakness of the cyclic methods practitioners use is of practical interest.

%


In this paper, we present a cyclic variant of SAGA, which we call C-SAGA.
We prove it converges linearly under the standard assumptions of strong convexity and smoothness.
We theoretically and empirically compare the rate of convergence with other deterministic and stochastic incremental methods.






\paragraph{Main method.}
C-SAGA has the same algorithmic structure as SAGA, except that the choice of $\nabla f_i$ is cyclic, not random.
Define $[k]_n=\mathrm{mod}(k,n)+1$. 
We can think of $[\cdot]_n$ as the modulo operator with 1-based indexing.
We can write C-SAGA as
\begin{align*}
x^{k+1}&=x^k-\gamma
\left(
\nabla f_{[k]_n}(x^k)-\nabla f_{[k]_n}(x^{k-n})
+\frac{1}{n}\sum^{n}_{i=1}
\nabla f_{[k-i]_n}(x^{k-i})
\right)
\end{align*}
where $x^0,x^{-1},x^{-2}\dots,x^{-n}$ are some starting points.
For computational efficiency, implementations of C-SAGA should store the $n$ most recent gradients. 
This way, the $x^{k+1}$-update will compute one new gradient $\nabla f_{[k]_n}(x^{k})$ and use past gradients stored in memory.

\paragraph{Prior work.}
The original analysis of SAGA by Defazio et al.\ \cite{defazio2014b:saga} relies crucially on the random index selection and is not adaptable to the deterministic cyclic setup.
Ying et al.\  \cite{8472787} analyzed a random permutation variant of SAGA using a similar but different Lyapunov function and established the rate $1-\frac{1}{108\kappa^2}+\mathcal{O}(1/\kappa^4)$ for the random permutation setup.

Stochastic incremental methods such as Finito, MISO, SVRG, SAG, SAGA, and SDCA
\cite{leroux2012,mairal2013,johnson2013,zhang2013,shalev2103,defazio2014b:saga,defazio2014:finito,nitanda2014,xiao2014,mairal2015,lan2015,schmidt2016,shalev2016,shalev2016b}
have been an intense and fruitful area of research in recent years.
These methods achieve a faster rate of convergence compared to classical  gradient descent.

Classical stochastic and deterministic incremental gradient methods require diminishing stepsize for convergence \cite{bertsekas:2011,agarwal2012}.
The diminishing stepsize limits the rate of convergence to a sublinear rate, usually $\mathcal{O}(1/k)$ or slower, even under the assumptions of strong convexity and smoothness.

IAG, which can be viewed as a cyclic variant of SAG \cite{schmidt2016}, was the first deterministic incremental method to achieve a linear rate of convergence.
Blatt et al.\ \cite{blatt2007:iag} first proved linear convergence for quadratic components, and G\"urb\"uzbalaban et al.\ \cite{parrilo2017:iag} recently proved the linear  (epoch-by-epoch) rate with factor
$1 - \mathcal{O}(1/n\kappa^2)$ for the general strongly convex case.
Mokhtari et al.\ \cite{mokhtari2016:c-finito} presented DIAG, an incremental method that can be viewed as a cyclic variant of Finito, and proved a linear (epoch-by-epoch) rate with factor $1 - \mathcal{O}(1/\kappa)$.
Other IAG-type methods include \cite{Tseng2014,mokhtari2016:c-finito,guo2017,gurbuzbalaban2016nips,gurbuzbalaban2016,gurbuzbalaban2016stronger}.

\section{Convergence analysis}
\label{s:proof}
We now analyze the convergence
of C-SAGA.
The main result of this work is stated as Theorem~\ref{thm:main},
which we prove in several steps.
\begin{theorem}
\label{thm:main}
Assume $f_i$ is $\mu$\nobreakdash-strongly convex
and $\nabla f_i$ is $L$-Lipschitz 
continuous for all $i=1,\dots,n$.
Define $V^k = \twonorm{x^k - x^\star}^2 + \frac{1}{n} \sum_{j=1}^n \twonorm{x^k - x^{k-j}}^2$. For $0< \gamma \leq \frac{\mu}{65\sqrt{n(n+1)}L^2}$, $V^k$ converges linearly. In particular, for  $\gamma = \frac{\mu}{130\sqrt{n(n+1)}L^2}$, 
\[
V^{k+n} \leq 
\left(1 - \frac{1}{368\kappa^2}
\right)
V^k,
\]
where $\kappa=L/\mu$.
\end{theorem}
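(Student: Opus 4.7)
The plan is to establish a per-iteration contraction $V^{k+1} \le (1-\rho)V^k$ with $\rho = \Theta(1/(n\kappa^2))$, then iterate $n$ times to recover the stated per-epoch rate. The starting point is to decompose the C-SAGA direction as $g^k = \nabla f(x^k) + e^k$. Using that a length-$n$ cyclic window hits each index exactly once, so that $\frac{1}{n}\sum_{i=1}^n \nabla f_{[k-i]_n}(x^k) = \nabla f(x^k)$, one rewrites
\[
e^k = \Bigl(1-\tfrac{1}{n}\Bigr)\bigl[\nabla f_{[k]_n}(x^k) - \nabla f_{[k]_n}(x^{k-n})\bigr] + \tfrac{1}{n}\sum_{i=1}^{n-1}\bigl[\nabla f_{[k-i]_n}(x^{k-i}) - \nabla f_{[k-i]_n}(x^k)\bigr].
\]
Applying $L$-Lipschitz continuity termwise together with Cauchy--Schwarz then yields $\twonorm{e^k}^2 \le 2(n+1)L^2 S^k$, where $S^k = \frac{1}{n}\sum_{j=1}^n \twonorm{x^k - x^{k-j}}^2$ is the staleness portion of $V^k$.

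Next I would derive two one-step inequalities. For $\twonorm{x^{k+1}-x^\star}^2$, expand the square and apply $\mu$-strong convexity on $\langle\nabla f(x^k),x^k-x^\star\rangle$, Young's inequality on the cross term $\langle e^k,x^k-x^\star\rangle$, and the smoothness bound $\twonorm{g^k}^2 \le 2L^2\twonorm{x^k-x^\star}^2 + 2\twonorm{e^k}^2$; substituting the error bound produces a decrease in $\twonorm{x^k-x^\star}^2$ of order $\gamma\mu$, modulo a perturbation of order $\gamma(n+1)L^2/\mu \cdot S^k$. For the staleness sum, the index-shift comparison of $V^{k+1}$ and $V^k$ drops the single oldest term $\frac{1}{n}\twonorm{x^k-x^{k-n}}^2$, adds the freshest term $\frac{\gamma^2}{n}\twonorm{g^k}^2$, and inflates each of the remaining $n-1$ terms via Young's inequality
\[
\twonorm{x^{k+1} - x^{k-i}}^2 \le (1+\beta)\twonorm{x^k - x^{k-i}}^2 + \bigl(1+\tfrac{1}{\beta}\bigr)\gamma^2\twonorm{g^k}^2.
\]
Combining the two inequalities yields a master estimate of the form $V^{k+1} - V^k \le a(\gamma)\twonorm{x^k-x^\star}^2 + b(\gamma,\beta) S^k - \frac{1}{n}\twonorm{x^k-x^{k-n}}^2$.

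Finally I would tune the Young parameter $\beta$ and take $\gamma = \Theta(\mu/(nL^2))$ so that both $a(\gamma)$ and $b(\gamma,\beta)$ are at most $-\Theta(1/(n\kappa^2))$; iterating $n$ times and invoking $(1-\rho)^n \le 1-n\rho/2$ recovers the per-epoch contraction, while the specific factor $\sqrt{n(n+1)}$ and the constants $65,130,368$ come out of the explicit optimization. The main obstacle is making the $S^k$ coefficient $b(\gamma,\beta)$ negative: Young's inequality introduces a multiplicative growth $(1+\beta)$ on the entire staleness sum at every step, while only the single term $\frac{1}{n}\twonorm{x^k-x^{k-n}}^2$ is available per iteration to absorb it. This is precisely what forces the stepsize to shrink like $1/n$ and yields a per-epoch (rather than per-iteration) contraction of order $1/\kappa^2$, matching the heuristic that a full C-SAGA epoch behaves like one gradient descent step with effective stepsize $n\gamma \sim 1/(L\kappa)$.
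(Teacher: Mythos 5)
Your setup (the error decomposition $g^k=\nabla f(x^k)+e^k$, the bound $\twonorm{e^k}^2\le 2(n+1)L^2S^k$, and the two one-step recursions) is correct, but the plan breaks at the decisive step: the master estimate you write down, $V^{k+1}-V^k \le a(\gamma)\twonorm{x^k-x^\star}^2 + b(\gamma,\beta)S^k - \tfrac{1}{n}\twonorm{x^k-x^{k-n}}^2$, can never yield the per-iteration contraction $V^{k+1}\le(1-\rho)V^k$ that your ``iterate $n$ times'' step requires. Every contribution to $b(\gamma,\beta)$ is nonnegative: the index shift inflates the surviving staleness terms by $(1+\beta)$ (contributing $+\beta$), and the $\gamma$- and $\gamma^2$-terms coming from the distance recursion and from $\twonorm{g^k}^2$ are positive; so $b\ge\beta>0$ no matter how you tune $\gamma$ and $\beta$, and your stated goal of making $b$ negative is unattainable. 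The only negative term available, $-\tfrac1n\twonorm{x^k-x^{k-n}}^2$, is not comparable to $S^k$: it can vanish while $S^k$ is large (e.g.\ $x^{k-n}=x^k$ but intermediate iterates differ), and in the state $x^k=x^\star$, $x^{k-n}=x^k$ your bound reduces to $V^{k+1}\le V^k+bS^k$, which is no contraction at all. This is exactly the obstacle you name in your last paragraph, but declaring that it ``yields a per-epoch contraction'' does not repair the argument: once the per-iteration inequality fails, there is nothing to iterate, and with the uniform weights $1/n$ in $V^k$ the index shift gives no multiplicative gain that could substitute for it.

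The paper's proof is structured precisely to sidestep this. It never attempts a one-step contraction of $V$; its one-step staleness estimate (Lemma~\ref{lem:recursive}) has leading coefficient $\sigma_1=1+\beta+4(1+\beta^{-1})\gamma^2L^2>1$, i.e.\ growth, and this growth is only tamed by unrolling over a full epoch: the two-dimensional recursion of Lemma~\ref{lem:e-val} controls $\sigma_j,\tau_j$ over $n$ steps, Lemmas~\ref{lem:ybound1} and \ref{lem:Lp2_bound} bound $\twonorm{x^{k+j}-x^k}^2$ and $\twonorm{x^{k+n}-x^{k+j}}^2$ by $O(c^2)$ multiples of the time-$k$ quantities, and Lemma~\ref{lem:bound_delta} bounds the epoch error $\Delta^k$ in $x^{k+n}=x^k-n\gamma\nabla f(x^k)+n\gamma\Delta^k$. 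Because after a full epoch all $n$ gaps appearing in $S^{k+n}$ involve freshly generated points, their total coefficient is $O(c^2)\ll 1$ rather than $1+\beta$, which is what produces the contraction $V^{k+n}\le(1-\tfrac{1}{368\kappa^2})V^k$. To salvage your route you would either have to carry out this epoch-level bookkeeping yourself (essentially reproducing the paper's lemmas) or change the Lyapunov function, e.g.\ to geometrically weighted staleness terms, so that the index shift itself provides a uniform gain; with the stated $V^k$ and uniform weights, the per-iteration contraction you assume is not established.
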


The goal of the analysis
is to establish a rate for
$\twonorm{x^k - x^\star}^2\rightarrow 0$.
However, directly using 
$\twonorm{x^k - x^\star}^2$
as a Lyapunov function in the analysis
seems difficult, as it does not
monotonically decrease.
We therefore use the
Lyapunov function $V^k$
which does monotonically decrease.



IAG and C-SAGA are similar incremental and aggregated-type algorithms with essentially the same computational cost per iteration.
However, the rate for IAG shown in G\"urb\"uzbalaban et al.\ \cite{parrilo2017:iag} is $1 - \mathcal{O}(1/n\kappa^2)$ while the rate for C-SAGA shown in Theorem~\ref{thm:main} is $1 - \mathcal{O}(1/\kappa^2)$, which is better.
One explanation for this discrepancy is that IAG is indeed slower (in the worst case) than C-SAGA, and our computational experiments support this possibility.
Another possibility is that the analysis for IAG is not tight.

In \cite{mokhtari2016:c-finito}, Mokhtari et al.\ showed an even better rate of $1 - \mathcal{O}(1/\kappa)$ for their method DIAG.
However, DIAG, which can be viewed as a cyclic variant of Finito, cannot take advantage of ``just-in-time'' updates, a technique applicable to SAG, SAGA, IAG, and C-SAGA that reduces the computational cost when the gradients are sparse \cite{schmidt2016,defazio2014b:saga}.
For certain subtle reasons, this technique does not work with Finito, as Defazio et al.\ acknowledge in their work in saying ``We do not recommend the usage of Finito when gradients are sparse.'' \cite{defazio2014:finito} and, by extension, to DIAG.
So when the gradients are sparse, an iteration of C-SAGA with just-in-time updates runs faster than an iteration of DIAG, and C-SAGA can still be theoretically competitive with DIAG.

Finally, we point out that Theorem~\ref{thm:main} combined with the inequality $f(x^k)-f(x^\star)\le (L/2)\|x^k-x^\star\|^2$, where $x^\star$ is a minimizer and $L$ is the Lipschitz constant of $\nabla f$, leads to a linear rate of convergence on function values.
\begin{corollary}
In the setup of Theorem~\ref{thm:main}, if $x^\star$ is the solution, then 
\[
f(x^{kn})-f(x^\star)\le 
\frac{L}{2}\left(1 - \frac{1}{368\kappa^2}
\right)^k
V^0
\]
 for $k=0,1,\dots$.
\end{corollary}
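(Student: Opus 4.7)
The plan is to combine Theorem~\ref{thm:main} with two elementary observations. First, I would iterate the per-epoch contraction from the theorem. A straightforward induction on $k$ gives
\[
V^{kn} \leq \left(1 - \frac{1}{368\kappa^2}\right)^k V^0
\]
for every $k \geq 0$: the base case $k=0$ is immediate, and the inductive step applies Theorem~\ref{thm:main} once, identifying the theorem's index with $kn$.

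Second, I would observe that $\twonorm{x^{kn} - x^\star}^2 \le V^{kn}$. This is immediate from the definition
\[
V^{kn} = \twonorm{x^{kn} - x^\star}^2 + \frac{1}{n}\sum_{j=1}^n \twonorm{x^{kn} - x^{kn-j}}^2,
\]
since the second summand is non-negative. Third, I would invoke the standard consequence of $L$-smoothness indicated by the authors: because $\nabla f = (1/n)\sum_i \nabla f_i$ is $L$-Lipschitz (as the average of $L$-Lipschitz maps) and $\nabla f(x^\star)=0$, the descent-lemma inequality $f(x)-f(x^\star)\le (L/2)\twonorm{x-x^\star}^2$ holds for all $x\in\reals^d$. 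Applying it at $x=x^{kn}$ and chaining with the previous two bounds gives exactly the claimed estimate.

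There is essentially no obstacle here: the corollary is a direct translation of the iterate-distance rate from Theorem~\ref{thm:main} into a function-value rate, paying the multiplicative factor $L/2$ and discarding the $(1/n)\sum_j \twonorm{x^{kn} - x^{kn-j}}^2$ slack inside $V^{kn}$. The only small point worth flagging is that $V^0$ on the right-hand side already includes both $\twonorm{x^0-x^\star}^2$ and the initialization terms $\twonorm{x^0-x^{-j}}^2$ for $j=1,\dots,n$, so the statement is presented in its tightest natural form and no further simplification of $V^0$ is needed.
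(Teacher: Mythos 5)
Your proposal is correct and follows essentially the same route as the paper: iterate the per-epoch contraction of Theorem~\ref{thm:main} to bound $V^{kn}$, drop the nonnegative difference terms to get $\twonorm{x^{kn}-x^\star}^2\le V^{kn}$, and apply $f(x)-f(x^\star)\le (L/2)\twonorm{x-x^\star}^2$, which holds since $\nabla f$ is $L$-Lipschitz and $\nabla f(x^\star)=0$. No gaps.
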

More specifically, applying the result $n$ times with $V^0,\dots,V^{n-1}$ establishes R-linear convergence for the whole sequence $f(x^{k})-f(x^\star)$ for $k=0,1,\dots$.

\subsection{Main proof}
We now present the main theoretical contribution, the proof. We first start by stating a few inequalities that are well-known.
Throughout this section, we write $[k]$ in place  $[k]_n$ for the sake of brevity.
Throughout this section, assume $f_i$ is $\mu$\nobreakdash-strongly convex and $\nabla f_i$ is $L$-Lipschitz continuous for all $i=1,\dots,n$.

Let $v_1,\dots,v_n\in \reals^d$. Then
\begin{equation}
\left\|\frac{1}{n} \sum_{i=1}^n v_i\right\|^2\le \frac{1}{n} \sum_{i=1}^n \|v_i\|^2,
\label{eq:triangle-like}
\end{equation}
which follows from Jensen's inequality.
For any $a, b \in \reals^d$
and $\beta>0$, we have
\begin{equation}
\|a+b\|^2\le (1+\beta)\|a\|^2+(1+\beta^{-1})\|b\|^2,
\label{eq:beta}
\end{equation}
which can be found in references like \cite{defazio2014b:saga}.
Young's inequality states
\begin{equation}
2\langle a, b \rangle \le  \varepsilon\|a\|^2+\varepsilon^{-1}\|b\|^2,
\label{eq:young}
\end{equation}
for any $\varepsilon>0$.
If $f$ is $\mu$-strongly convex and $\nabla f$ is $L$-Lipschitz,
\begin{equation}
\langle x-y,\nabla f(x)-\nabla f(y)\rangle
\ge \frac{1}{L+\mu}\|\nabla f(x)-\nabla f(y)\|_2^2
+\frac{L\mu}{L+\mu}\|x - y\|_2^2
\label{eq:lem:cocoercivity2}
\end{equation}
for any $x,y$ \cite[Theorem 2.1.5]{nesterov2014}.

\begin{lemma}
\label{lem:e-val}
Let $\sigma_0=1$, $\tau_0=0$, and
\[
\begin{bmatrix}
\sigma_{k+1}\\
\tau_{k+1}
\end{bmatrix}
=
\begin{bmatrix}
1+c_1&1\\
c_2& 1
\end{bmatrix}
\begin{bmatrix}
\sigma_{k}\\
\tau_{k}
\end{bmatrix}
\]
for $k=0,1,\dots$.
Assume $c_1,c_2\ge 0$ and $1+c_1\ge c_2$. Then 
\[
\begin{bmatrix}
\sigma_{k}\\
\tau_{k}
\end{bmatrix}
\leq
\lambda^{k}_1
\begin{bmatrix}
1+\frac{c_1}{2\sqrt{c_1^2+4c_2}}\\
\frac{2c_2}{2\sqrt{c_1^2+4c_2}}
\end{bmatrix},
\]
where $\lambda_1 = 1+\frac{c_1+\sqrt{c_1^2+4c_2}}{2}$ .
\end{lemma}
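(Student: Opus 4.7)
The plan is to carry out the standard spectral analysis of the $2{\times}2$ transition matrix $A = \begin{bmatrix} 1+c_1 & 1 \\ c_2 & 1\end{bmatrix}$, noting that $(\sigma_k,\tau_k)^\top = A^k (1,0)^\top$. Its characteristic polynomial $\lambda^2 - (2+c_1)\lambda + (1+c_1-c_2)$ has roots $\lambda_{1,2} = 1 + (c_1\pm D)/2$ with $D := \sqrt{c_1^2 + 4c_2}$, and the larger one matches the $\lambda_1$ in the statement.

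The first step I would carry out is to locate both eigenvalues in $[0,\infty)$. The bound $\lambda_2 \geq 0$ is equivalent to $(2+c_1)^2 \geq c_1^2 + 4c_2$, i.e.\ $1+c_1\geq c_2$, which is precisely the hypothesis; the bound $\lambda_2 \leq 1$ reduces to $c_1 \leq D$ and uses only $c_2\geq 0$. Consequently $0 \leq \lambda_2^k \leq \lambda_1^k$ for every $k$, which is the key fact driving the two componentwise bounds.

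Next I would diagonalize $A$ (or solve the scalar recurrences directly). Using eigenvectors $(1,\lambda_i-1-c_1)^\top = (1,(\pm D-c_1)/2)^\top$ and matching the initial data $\sigma_0=1,\tau_0=0$ yields
\[
\sigma_k = \frac{(c_1+D)\lambda_1^k + (D-c_1)\lambda_2^k}{2D}, \qquad
\tau_k = \frac{c_2}{D}\bigl(\lambda_1^k - \lambda_2^k\bigr),
\]
valid whenever $D>0$; the degenerate case $D=0$ forces $c_1=c_2=0$ and hence $\sigma_k\equiv 1,\tau_k\equiv 0$, which trivially satisfies the bound. Applying $\lambda_2^k \leq \lambda_1^k$ in the first formula gives $\sigma_k \leq \lambda_1^k$, and since $c_1\geq 0$ this is dominated by $\lambda_1^k\bigl(1+c_1/(2D)\bigr)$. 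Dropping the nonnegative term $\lambda_2^k$ in the second formula gives $\tau_k \leq \lambda_1^k c_2/D$, which matches the stated bound since $\tfrac{2c_2}{2\sqrt{c_1^2+4c_2}} = c_2/D$.

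The only subtle point is that the hypothesis $1+c_1\geq c_2$ is exactly what is needed to keep $\lambda_2$ nonnegative; without it, $\lambda_2^k$ could alternate in sign and the crude bound $\lambda_2^k\leq \lambda_1^k$ would fail on odd $k$. Beyond this there is no real obstacle. The bound stated for $\sigma_k$ is loose by a factor between $1$ and $3/2$ compared to the tight value $\lambda_1^k$; this appears deliberate so that the right-hand side packages into the compact vector shown, and a direct induction using the looser form alone does not close, which is why I would go through the explicit closed-form route rather than a Lyapunov-style induction.
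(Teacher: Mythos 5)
Your proposal is correct and follows essentially the same route as the paper: diagonalize $A$, use the hypothesis $1+c_1\ge c_2$ to ensure $0\le\lambda_2\le\lambda_1$, and bound the resulting expressions by $\lambda_1^k$ times the stated vector. The only (minor) differences are that you write out the two components in closed form rather than the paper's vector rearrangement, and you explicitly treat the degenerate case $c_1=c_2=0$, which the paper's division by $2\sqrt{c_1^2+4c_2}$ silently skips.
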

\begin{proof}
The eigenvalues and eigenvectors of $A=
\begin{bmatrix}
1+c_1&1\\
c_2& 1
\end{bmatrix}$ are
\[
\lambda_1=1+\frac{c_1+\sqrt{c_1^2+4c_2}}{2}
\qquad
\lambda_2=1+\frac{c_1-\sqrt{c_1^2+4c_2}}{2}
\]
and 
\[
v_1
=
\begin{bmatrix}
c_1+\sqrt{c_1^2+4c_2}\\
2c_2
\end{bmatrix}
\qquad
v_2
=
\begin{bmatrix}
c_1-\sqrt{c_1^2+4c_2}\\
2c_2
\end{bmatrix}.
\]
It is simple to verify  
$\lambda_1,\lambda_2\ge 0$ under $1 + c_1  \geq c_2$.
From the following decompostion
\[
\begin{bmatrix}
\sigma_0\\
\tau_0
\end{bmatrix}
=
\begin{bmatrix}
1\\
0
\end{bmatrix}
=
\frac{1}{2\sqrt{c_1^2+4c_2}}v_1
-
\frac{1}{2\sqrt{c_1^2+4c_2}}v_2,
\]
we have
\begin{align*}
    \begin{bmatrix}
    \sigma_{k}\\
    \tau_{k}
    \end{bmatrix}
    &=
    A^k
    \begin{bmatrix}
    \sigma_0\\
    \tau_0
    \end{bmatrix}
    =
    \frac{1}{2\sqrt{c_1^2+4c_2}} 
    \left( \lambda_1^k v_1
    -
    \lambda_2^k v_2
    \right)
    \\
    &=
    (\lambda_1^k - \lambda_2^k)
    \begin{bmatrix}
    \frac{c_1}{2\sqrt{c_1^2+4c_2}}\\
    \frac{2c_2}{2\sqrt{c_1^2+4c_2}}
    \end{bmatrix}
    + 
    (\lambda_1^k + \lambda_2^k)
    \begin{bmatrix}
    \frac12\\
    0
    \end{bmatrix}
    \\
    &\leq
    \lambda_1^k 
    \begin{bmatrix}
    1+\frac{c_1}{2\sqrt{c_1^2+4c_2}} \\
    \frac{2c_2}{2\sqrt{c_1^2+4c_2}}
    \end{bmatrix}
\end{align*}
\qed
\end{proof}

Define
\[
\bar{g}^k=\frac{1}{n}\sum^{n}_{i=1}\nabla
f_{[k-i]}(x^{k-i}),
\]
so we can write
$
x^{k+1}=x^k-\gamma
\left(
\nabla f_{[k]}(x^k)-\nabla f_{[k]}(x^{k-n})
+\bar{g}^k
\right)
$.
\begin{lemma}
\label{lem:g-bound}
Let $x\in \reals^d$ be arbitrary.  Then
\begin{align*}
\|\bar{g}^k\|_2^2
    &\le
\frac{2L^2}{n}\left(
n  \twonorm{
x-x^\star
    }^2 +
\sum^{n}_{i=1}
    \twonorm{
x^{k-i}-x
    }^2 \right).
\end{align*}
\end{lemma}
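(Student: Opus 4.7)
The plan is to exploit the cyclic indexing to introduce $x^\star$ via the optimality condition $\nabla f(x^\star)=0$, and then apply Lipschitz continuity together with the convexity inequalities \eqref{eq:triangle-like} and \eqref{eq:beta}.

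First I would observe that since the indices $[k-1],[k-2],\dots,[k-n]$ form a permutation of $1,\dots,n$, we have
\[
\frac{1}{n}\sum_{i=1}^n \nabla f_{[k-i]}(x^\star) = \frac{1}{n}\sum_{i=1}^n \nabla f_i(x^\star) = \nabla f(x^\star) = 0.
\]
Subtracting this zero from $\brg^k$ and then adding and subtracting $\nabla f_{[k-i]}(x)$ inside each summand gives the splitting
\[
\brg^k
= \underbrace{\frac{1}{n}\sum_{i=1}^n\bigl(\nabla f_{[k-i]}(x^{k-i}) - \nabla f_{[k-i]}(x)\bigr)}_{=:\,a}
+ \underbrace{\frac{1}{n}\sum_{i=1}^n\bigl(\nabla f_{[k-i]}(x) - \nabla f_{[k-i]}(x^\star)\bigr)}_{=:\,b}.
\]

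Next I would apply \eqref{eq:beta} with $\beta = 1$ to obtain $\|\brg^k\|^2 \le 2\|a\|^2 + 2\|b\|^2$, and then bound each piece by combining \eqref{eq:triangle-like} (Jensen's inequality on the average of $n$ vectors) with the $L$-Lipschitz continuity of each $\nabla f_i$. For $a$, this yields $\|a\|^2 \le \frac{1}{n}\sum_i\|\nabla f_{[k-i]}(x^{k-i}) - \nabla f_{[k-i]}(x)\|^2 \le \frac{L^2}{n}\sum_i\|x^{k-i}-x\|^2$; for $b$, the $L$-Lipschitz bound is uniform in $i$ and gives $\|b\|^2 \le L^2\|x-x^\star\|^2$. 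Adding these and multiplying by $2$ produces exactly the claimed bound $\frac{2L^2}{n}\bigl(n\|x-x^\star\|^2 + \sum_i\|x^{k-i}-x\|^2\bigr)$.

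There is no substantive obstacle here; the only mildly delicate point is the permutation observation, which is specific to the cyclic variant (and would fail for a random-index SAGA without extra conditioning). Everything else is a routine application of the two standard inequalities already catalogued in the paper.
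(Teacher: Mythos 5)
Your proof is correct and takes essentially the same route as the paper's: both arguments start by subtracting the zero quantity $\frac{1}{n}\sum_{i}\nabla f_{[k-i]}(x^\star)$ (the cyclic-permutation/optimality observation) and then combine \eqref{eq:triangle-like}, the $L$-Lipschitz continuity of each $\nabla f_i$, and \eqref{eq:beta} with $\beta=1$. The only cosmetic difference is that you perform the factor-of-two splitting at the gradient level (inserting $\nabla f_{[k-i]}(x)$) before applying Lipschitz continuity, whereas the paper first obtains $\frac{L^2}{n}\sum_{i}\twonorm{x^{k-i}-x^\star}^2$ and then splits $x^{k-i}-x^\star=(x^{k-i}-x)+(x-x^\star)$; both orderings yield the identical bound.
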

\begin{proof}
\begin{align*}
\|
\bar{g}^k
\|_2^2
&=
    \twonorm{
    \frac{1}{n}\sum^{n}_{i=1}\nabla f_{[k-i]}(x^{k-i})-\nabla f_{[k-i]}(x^\star)
    }^2 \\
&\le
\frac{1}{n}\sum^{n}_{i=1}
    \twonorm{
    \nabla f_{[k-i]}(x^{k-i})-\nabla f_{[k-i]}(x^\star)
    }^2 \\
&\le
\frac{L^2}{n}\sum^{n}_{i=1}
    \twonorm{
x^{k-i}-x^\star
    }^2 
\le
\frac{2L^2}{n}\left(
n  \twonorm{
x-x^\star
    }^2 +
\sum^{n}_{i=1}
    \twonorm{
x^{k-i}-x
    }^2 \right).
\end{align*}
Here the first equality follows from the optimality condition,
the first inequality follows from
\eqref{eq:triangle-like},
the second inequality follows from $L$-Lipschitz gradient assumption,
and
the third inequality follows from \eqref{eq:beta}.
\qed\end{proof}

Define
\[
\sigma_1=
1+ \beta + 4(1+\beta^{-1})\gamma^2 L^2 ,
\qquad
\tau_1=(4/n)(1+\beta^{-1})\gamma^2L^2,
\]
and 
\[
\sigma_j = \sigma_{j-1}\sigma_1+\tau_{j-1},
\qquad
\tau_j = \sigma_{j-1}\tau_1+\tau_{j-1},
\]
for $j=2,3,\dots$. Note this follows the recurrence defined in Lemma \ref{lem:e-val} when $c_1 = \beta + 4(1+\beta^{-1})\gamma^2 L^2$ and $c_2 = (4/n)(1+\beta^{-1})\gamma^2L^2$ are specified.

\begin{lemma}
\label{lem:recursive}
Let $x\in \reals^d$. Then
\[
    \twonorm{x^{k} -x}^2 \le
    \sigma_1
     \twonorm{
    x^{k-1} -x
    }^2 
 +
n\tau_1
     \twonorm{
    x^{\star} -x
    }^2 +
n\tau_1
     \twonorm{
    x^{k-1-n} -x
    }^2
    + 
    \tau_1\sum^{n+1}_{i=2}\twonorm{x^{k-i}-x}^2.
\]

\end{lemma}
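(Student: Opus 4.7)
The plan is to expand $\|x^k - x\|^2$ using the C-SAGA update rule, apply inequality \eqref{eq:beta} once to separate the $\|x^{k-1}-x\|^2$ term from the combined gradient correction, and then bound the gradient correction piece by piece using Lemma \ref{lem:g-bound} and $L$-Lipschitzness. The target identifies $\sigma_1 = 1 + \beta + 4(1+\beta^{-1})\gamma^2 L^2$ and $\tau_1 = (4/n)(1+\beta^{-1})\gamma^2 L^2$, which already tells us that after factoring out $(1+\beta^{-1})\gamma^2$ the remaining coefficients should be exactly $4L^2$ on the main pieces and $4L^2/n$ on the sum — this is the fingerprint of applying $\|a+b\|^2 \le 2\|a\|^2 + 2\|b\|^2$ once after \eqref{eq:beta}, combined with Lemma \ref{lem:g-bound}.

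First I would write $x^k - x = x^{k-1} - x - \gamma\bigl(\nabla f_{[k-1]}(x^{k-1}) - \nabla f_{[k-1]}(x^{k-1-n}) + \bar{g}^{k-1}\bigr)$ using the update rule, and apply \eqref{eq:beta} with parameter $\beta$ to obtain
\[
\|x^k - x\|^2 \le (1+\beta)\|x^{k-1}-x\|^2 + (1+\beta^{-1})\gamma^2 \bigl\|\nabla f_{[k-1]}(x^{k-1}) - \nabla f_{[k-1]}(x^{k-1-n}) + \bar{g}^{k-1}\bigr\|^2.
\]
Next I would split the gradient norm via $\|a+b\|^2 \le 2\|a\|^2 + 2\|b\|^2$ (the $\beta=1$ case of \eqref{eq:beta}) into the difference-of-gradients term and $\bar{g}^{k-1}$.

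For the first piece I would apply $L$-Lipschitzness of $\nabla f_{[k-1]}$ and then $\|a+b\|^2 \le 2\|a\|^2+2\|b\|^2$ again to get $2\|\nabla f_{[k-1]}(x^{k-1}) - \nabla f_{[k-1]}(x^{k-1-n})\|^2 \le 4L^2\|x^{k-1}-x\|^2 + 4L^2\|x^{k-1-n}-x\|^2$. For the second piece I would invoke Lemma \ref{lem:g-bound} at index $k-1$ to get $2\|\bar{g}^{k-1}\|^2 \le 4L^2\|x-x^\star\|^2 + (4L^2/n)\sum_{i=1}^n \|x^{k-1-i}-x\|^2$.

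Substituting everything back and collecting coefficients: the coefficient of $\|x^{k-1}-x\|^2$ becomes $(1+\beta) + 4(1+\beta^{-1})\gamma^2 L^2 = \sigma_1$; the coefficients of $\|x^{k-1-n}-x\|^2$ and of $\|x^\star-x\|^2$ each become $4(1+\beta^{-1})\gamma^2 L^2 = n\tau_1$; and the summation becomes $(4/n)(1+\beta^{-1})\gamma^2 L^2 \sum_{i=1}^n \|x^{k-1-i}-x\|^2$, which after the reindexing $j=i+1$ is $\tau_1 \sum_{j=2}^{n+1}\|x^{k-j}-x\|^2$. This matches the claim exactly. There is no real obstacle; the only care needed is bookkeeping the reindexing of the sum and verifying that the coefficients line up with the definitions of $\sigma_1$ and $\tau_1$.
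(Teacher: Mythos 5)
Your proposal is correct and follows the paper's own argument essentially step for step: expand $x^k-x$ via the update, apply \eqref{eq:beta} with parameter $\beta$ and then with $\beta=1$, bound the gradient difference by $L$-Lipschitzness plus one more application of \eqref{eq:beta}, bound $\bar g^{k-1}$ with Lemma~\ref{lem:g-bound}, and collect coefficients into $\sigma_1$, $n\tau_1$, and $\tau_1$. The bookkeeping and reindexing all check out, so there is nothing to add.
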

\begin{proof}
\begin{align*}
&\twonorm{x^{k}-x}^2 
    =  
    \twonorm{x^{k-1}-x-\gamma
\left(
\nabla f_{[k-1]}(x^{k-1})-\nabla f_{[k-1]}(x^{k-1-n})
+\bar{g}^{k-1}
\right)
    }^2 \\
    &
    \le (1+\beta)
    \twonorm{
    x^{k-1} -x
    }^2 
    + 
    2(1+\beta^{-1})\gamma^2
    \twonorm{
    \nabla f_{[k-1]}(x^{k-1}) - \nabla f_{[k-1]}(x^{k-1-n}) 
    }^2 
    + 
    2(1+\beta^{-1})\gamma^2
    \twonorm{
    \bar g^{k-1}
    }^2 
    \\
    &
    \le (1+\beta)
    \twonorm{
    x^{k-1} -x
    }^2 
    + 
    2(1+\beta^{-1})\gamma^2L^2
    \twonorm{x^{k-1} -x^{k-1-n} 
    }^2 
    + 
    2(1+\beta^{-1})\gamma^2
    \twonorm{
    \bar g^{k-1}
    }^2 
    \\
    &
    \le (1+\beta)
    \twonorm{
    x^{k-1} -x
    }^2 
    + 
    4(1+\beta^{-1})\gamma^2L^2
    \twonorm{x^{k-1} - x
    }^2 
    + 
    4(1+\beta^{-1})\gamma^2L^2
    \twonorm{x- x^{k-1-n}
    }^2 
    \\
&    + 
    2(1+\beta^{-1})\gamma^2
\frac{2L^2}{n}\left(
n  \twonorm{
x-x^\star
    }^2 +
\sum^{n}_{i=1}
    \twonorm{
x^{k-1-i}-x
    }^2 \right)
    \\
    &
    = \sigma_1
    \twonorm{
    x^{k-1} -x
    }^2 
    + 
n\tau_1
    \twonorm{x- x^{k-1-n}
    }^2 
    +
    n\tau_1
  \twonorm{
x-x^\star}^2
    + 
   \tau_1
\sum^{n}_{i=1}
    \twonorm{
x^{k-1-i}-x
    }^2.
\end{align*}
Here, the first inequality uses \eqref{eq:beta} twice, one with $\beta >0$ and the other with $\beta=1$.  The second inequality uses $L$-Lipschitz and 
the third inequality uses \eqref{eq:beta} and Lemma~\ref{lem:g-bound}.
\qed\end{proof}



(For Lemmas~\ref{lem:ybound1}, \ref{lem:Lp2_bound}, and \ref{lem:bound_delta}, we use the same $\gamma$ and $c$.)
\begin{lemma}
\label{lem:ybound1}
Let $\gamma=c/(L\sqrt{n(n+1)})$, where $c>0$.
Let $1\le j\le n$.
Then
\begin{align*}
\twonorm{x^{k+j}-x^{k}}^2    
\le
\frac{33c^2}{n}
\exp\left(8c^2\right)
\sum^{n}_{i=1}\twonorm{x^{k-i}-x^{k}}^2+
16.5c^2
\exp\left(8c^2\right)
\frac{j}{n}
\twonorm{
    x^{\star} -x^{k}
    }^2.
\end{align*}
\end{lemma}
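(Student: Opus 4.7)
The plan is to iteratively apply Lemma~\ref{lem:recursive} with $x = x^k$ over $j$ steps. Since $\|x^k - x^k\|^2 = 0$, the ``terminal'' $A_0$ term in the unrolled recursion vanishes, leaving $A_l := \|x^{k+l} - x^k\|^2$ expressed as a linear combination of the past squared distances $\|x^{k-m} - x^k\|^2$ (grouped into $B := \sum_{i=1}^n \|x^{k-i} - x^k\|^2$) and of $C := \|x^\star - x^k\|^2$, with coefficients that I would track via the auxiliary sequences $(\sigma_l^{\mathrm{ev}}, \tau_l^{\mathrm{ev}})$ defined immediately before Lemma~\ref{lem:recursive}---to which Lemma~\ref{lem:e-val} directly applies with $1 + c_1 = \sigma_1$, $c_2 = \tau_1$.

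First I would specialize Lemma~\ref{lem:recursive} at each index $k + l$ (for $l = 1, \dots, j$), separating indices $k+l-i$ into forward ones (contributing $A_{l-i}$ for $l - i \ge 0$) and past ones (contributing $\|x^{k-(i-l)}-x^k\|^2$ for $i > l$) to obtain
\begin{align*}
A_l \le \sigma_1 A_{l-1} + \tau_1 \sum_{r=0}^{l-2} A_r + \tau_1 \sum_{m=1}^{n+1-l}\|x^{k-m}-x^k\|^2 + n\tau_1 \|x^{k-(n+1-l)}-x^k\|^2 + n\tau_1 C.
\end{align*}
Unrolling from $A_0 = 0$, each $A_l$ becomes $\sum_m w_l(m) \|x^{k-m}-x^k\|^2 + \phi_l C$ for coefficients $w_l(m), \phi_l$ that I would identify via path summation: the path coefficient accumulated from step $l$ down to step $l'$ equals exactly $\sigma_{l-l'}^{\mathrm{ev}}$ (verified by induction against the defining recursion of $\sigma_\cdot^{\mathrm{ev}}$), so summing the per-step direct contributions gives $\phi_l = n\tau_l^{\mathrm{ev}}$ and an explicit closed form for $w_l(m)$ that yields $\max_m w_l(m) \le \tau_l^{\mathrm{ev}} + (n+1)\tau_1 \sigma_{l-1}^{\mathrm{ev}}$.

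Next, with the choice $\beta = 1/n$ in the definition of $\sigma_1, \tau_1$ (so that $c_1 := \sigma_1 - 1 = (1+4c^2)/n$ and $c_2 := \tau_1 = 4c^2/n^2$), I would apply Lemma~\ref{lem:e-val} to bound $\sigma_l^{\mathrm{ev}}$ and $\tau_l^{\mathrm{ev}}$, using the estimate $\sqrt{1 + 24c^2 + 16c^4} \le 1 + 12c^2$ (which follows from $(1+12c^2)^2 - (1+24c^2+16c^4) = 128 c^4 \ge 0$) to show $\lambda_1^n \le \exp(1 + 8c^2) = e \exp(8c^2)$. Combining gives $A_j \le \bigl(\tau_j^{\mathrm{ev}} + (n+1)\tau_1 \sigma_{j-1}^{\mathrm{ev}}\bigr) B + n\tau_j^{\mathrm{ev}} C$, which evaluates to at most $\frac{33c^2}{n}\exp(8c^2) B + 16.5 c^2 \exp(8c^2) \frac{j}{n} C$; the $j/n$ factor on $C$ arises because $\tau_j^{\mathrm{ev}} = \tau_1 \sum_{s=0}^{j-1}\sigma_s^{\mathrm{ev}}$ is bounded by $j\tau_1 \cdot \lambda_1^{j-1}$, and $n\tau_1 = 4c^2/n$.

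The main obstacle is obtaining the correct $1/n$ scaling on the $B$-coefficient. A naive approach that bounds each past iterate norm crudely by $B$ at every unrolled step and carries a single ``max'' coefficient through the recursion yields a $B$-coefficient of order $c^2$ rather than $c^2/n$, losing a factor of $n$. The correct bound exploits that the $n\tau_1$ ``boost'' in Lemma~\ref{lem:recursive} lands on exactly one specific past index per application (namely $m = n + 1 - l$), so tracking $w_l(m)$ index-by-index via the closed form and only then taking the max over $m$ keeps the coefficient $O(c^2/n)$ uniformly in $l$.
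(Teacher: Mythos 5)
Your proposal takes essentially the same route as the paper: the paper also unrolls Lemma~\ref{lem:recursive} $j$ times with the reference point chosen so the self term vanishes (it uses $x=x^{k-j}$ and shifts indices at the end, whereas you use $x=x^k$ and shift at the start), tracks the coefficients through the same $\sigma_j,\tau_j$ recursion, and invokes Lemma~\ref{lem:e-val} with $\beta=1/n$ and the identical estimates $\sqrt{1+24c^2+16c^4}\le 1+12c^2$ and $\lambda_1^n\le e\exp(8c^2)$. The only difference is minor bookkeeping: the paper bounds the per-index coefficient by $2n\tau_1\sigma_{n-1}\le \tfrac{33c^2}{n}\exp(8c^2)$ (with $12e\approx 32.6$, so little slack), while your looser $\tau_j+(n+1)\tau_1\sigma_{j-1}$ needs slightly more careful evaluation to stay under the stated constant for very small $n$, but the argument is the same.
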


\begin{proof}
Apply Lemma~\ref{lem:recursive} with $x=x^{k-j}$
recursively to get
\begin{align*}
&\twonorm{x^k-x^{k-j}}^2    
\le
    \sigma_1
     \twonorm{
    x^{k-1} -x^{k-j}
    }^2 
 +
n\tau_1
     \twonorm{
    x^{\star} -x^{k-j}
    }^2 +
n\tau_1
     \twonorm{
    x^{k-1-n} -x^{k-j}
    }^2
    + 
    \tau_1\sum^{n+1}_{i=2}\twonorm{x^{k-i}-x^{k-j}}^2\\
&\le
\sigma_2
     \twonorm{
    x^{k-2} -x^{k-j}
    }^2 
+
\tau_1\sum^j_{i=3}\twonorm{x^{k-i}-x^{k-j}}^2
+
\tau_1\sum^{n+1}_{i=j+1}\twonorm{x^{k-i}-x^{k-j}}^2\\
&\quad+
\sigma_1\tau_1\sum^j_{i=3}\twonorm{x^{k-i}-x^{k-j}}^2
+
\sigma_1\tau_1\sum^{n+2}_{i=j+1}\twonorm{x^{k-i}-x^{k-j}}^2\\
&\quad+
n\tau_1(1+\sigma_1)\twonorm{
    x^{\star} -x^{k-j}
    }^2+
n\tau_1
\left(
     \twonorm{
    x^{k-1-n} -x^{k-j}
    }^2+
    \sigma_1
     \twonorm{
    x^{k-2-n} -x^{k-j}
    }^2
\right)\\
&=
\sigma_2
     \twonorm{
    x^{k-2} -x^{k-j}
    }^2 
+
\tau_2\sum^j_{i=3}\twonorm{x^{k-i}-x^{k-j}}^2
+
\tau_1\sum^2_{\ell=1}\sigma_{\ell-1}
\sum^{n+\ell}_{i=j+1}\twonorm{x^{k-i}-x^{k-j}}^2\\
&\quad+
n\tau_1(\sigma_0+\sigma_1)\twonorm{
    x^{\star} -x^{k-j}
    }^2
    +
n\tau_1
\sum^2_{\ell=1}\sigma_{\ell-1}
     \twonorm{
    x^{k-\ell-n} -x^{k-j}
    }^2.
\end{align*}
We now do this recursively $j$ times to get
\begin{align*}
&\twonorm{x^k-x^{k-j}}^2    
\le
\tau_1\sum^j_{\ell=1}\sigma_{\ell-1}
\sum^{n+\ell}_{i=j+1}\twonorm{x^{k-i}-x^{k-j}}^2\\
&\quad+
n\tau_1(\sigma_0+\dots+\sigma_{j-1})\twonorm{
    x^{\star} -x^{k-j}
    }^2+
n\tau_1
\sum^j_{\ell=1}\sigma_{\ell-1}
     \twonorm{
    x^{k-\ell-n} -x^{k-j}
    }^2\\    
&\le
\tau_1n\sigma_{n-1}
\sum^{n+j}_{i=j+1}\twonorm{x^{k-i}-x^{k-j}}^2\\
&\quad+
n\tau_1j\sigma_{j-1}\twonorm{
    x^{\star} -x^{k-j}
    }^2+
n\tau_1\sigma_{n-1}
\sum^j_{\ell=1}
     \twonorm{
    x^{k-\ell-n} -x^{k-j}
    }^2\\
&\le
2n\tau_1\sigma_{n-1}
\sum^{n+j}_{i=j+1}\twonorm{x^{k-i}-x^{k-j}}^2+
n\tau_1j\sigma_{j-1}\twonorm{
    x^{\star} -x^{k-j}
    }^2.
\end{align*}
Finally, we shift the indices to get
\begin{equation}
\twonorm{x^{k+j}-x^{k}}^2    
\le
2n\tau_1\sigma_{n-1}
\sum^{n}_{i=1}\twonorm{x^{k-i}-x^{k}}^2+
n\tau_1j\sigma_{j-1}\twonorm{
    x^{\star} -x^{k}
    }^2.
    \label{eq:shift-index}
\end{equation}

Now we bound $\sigma_{n-1}$ and specify $\tau_1$
with the choice $\beta=1/n$.
Write
\[
c_1= \beta + 4(1+\beta^{-1})\gamma^2 L^2 = \frac{1}{n} + \frac{4c^2}{n},
\qquad 
c_2=(4/n)(1+\beta^{-1})\gamma^2L^2 = \frac{4c^2}{n^2}.
\]
We use Lemma~\ref{lem:e-val} to get
\begin{align*}
\lambda_1&=
1+\frac{1}{2n}+\frac{1}{2n}
\left(
4c^2
+\sqrt{1+24c^2+16c^4}
\right)\\
&\le
1+\frac{1}{2n}+\frac{1}{2n}
\left(
4c^2
+\sqrt{(1+12c^2)^2}
\right)\\
&=
1+\frac{1}{n}+\frac{8c^2}{n}
\le
\exp\left(
\frac{1}{n}+\frac{8c^2}{n}
\right)
\end{align*}
and
\[
1+\frac{c_1}{2\sqrt{c_1^2+4c_2}}
=1+\frac{1+4c^2}{2\sqrt{(1+4c^2)^2+16c^2}}
\le 1.5.
\]
So for any $i = 1,\ldots,n$, we have
\[
\sigma_{i}
\le 
\lambda_1^{i} \left(1+\frac{c_1}{2\sqrt{c_1^2+4c_2}}\right) 
\le 
1.5 \exp\left(\frac{i}{n}+\frac{8c^2i}{n}\right)
\le 
1.5\exp(1)
\exp\left(8c^2\right).
\]
Combine this bound in the inequality above and the fact $\tau_1 = c_2$ to get the stated result.
\qed
\end{proof}

\begin{lemma}\label{lem:Lp2_bound}
Let $\gamma=c/(L\sqrt{n(n+1)})$, where $c>0$. Let $1\le j\le n$.
Then 
\[
\twonorm{x^{k+n}-x^{k+j}}^2    
\le
33
c^2(1+50c^2)\exp(16c^2)
\left(\twonorm{
    x^{\star} -x^{k}
    }^2+
    \frac{2}{n}\sum^{n}_{i=1}\twonorm{x^{k-i}-x^{k}}^2
    \right)
\]
\end{lemma}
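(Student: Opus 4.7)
The plan is to apply Lemma~\ref{lem:ybound1} twice in a nested fashion: once outer, to bound $\|x^{k+n}-x^{k+j}\|^2$ as $(n-j)$ steps forward from $x^{k+j}$, and once inner, to reduce the resulting quantities anchored at time $k+j$ back to quantities at time $k$. The factor $\exp(16c^2) = \exp(8c^2)^2$ in the target bound arises naturally from composing the two $\exp(8c^2)$ factors that each application of Lemma~\ref{lem:ybound1} contributes.

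First I would dispose of the boundary case $j=n$, where the left-hand side vanishes. For $1 \leq j \leq n-1$, applying Lemma~\ref{lem:ybound1} with the substitutions $k \mapsto k+j$ and $j \mapsto n-j$ yields
\[
\|x^{k+n}-x^{k+j}\|^2 \leq \frac{33 c^2}{n}\exp(8c^2) \sum_{i=1}^n \|x^{k+j-i}-x^{k+j}\|^2 + 16.5\, c^2 \exp(8c^2)\, \frac{n-j}{n}\, \|x^\star - x^{k+j}\|^2.
\]
I would then reduce both right-hand terms to quantities at time $k$. For $\|x^\star - x^{k+j}\|^2$, I would use the elementary bound $\|x^\star - x^{k+j}\|^2 \leq 2\|x^\star-x^k\|^2 + 2\|x^k-x^{k+j}\|^2$ and apply Lemma~\ref{lem:ybound1} to the second summand. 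For the sum $\sum_{i=1}^n\|x^{k+j-i}-x^{k+j}\|^2$, I would use $\|x^{k+j-i}-x^{k+j}\|^2 \leq 2\|x^{k+j-i}-x^k\|^2 + 2\|x^k-x^{k+j}\|^2$ and split the resulting sum at $i=j$: for $1 \leq i \leq j-1$ the step $j-i \in \{1,\dots,j-1\}$ is admissible, so Lemma~\ref{lem:ybound1} bounds $\|x^{k+(j-i)}-x^k\|^2$ in terms of time-$k$ quantities (the $i=j$ term vanishes); for $j < i \leq n$, the term $\|x^{k+j-i}-x^k\|^2 = \|x^{k-(i-j)}-x^k\|^2$ is already a backward iterate appearing on the target right-hand side.

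Finally, I would collect terms and simplify. Using $j/n,\, (n-j)/n \leq 1$ together with the arithmetic-series identity $\sum_{i=1}^{j-1}(j-i) = j(j-1)/2$, I expect the coefficient on $\|x^\star-x^k\|^2$ to combine an $O(c^2 \exp(8c^2))$ contribution (from the $\|x^\star-x^k\|^2$ piece that passes through only the outer Lemma~\ref{lem:ybound1}) with several $O(c^4 \exp(16c^2))$ contributions (from the nested compositions) into a bound of the form $33\, c^2(1+50c^2)\exp(16c^2)$, and the coefficient on $\frac{1}{n}\sum_{i=1}^n\|x^{k-i}-x^k\|^2$ to match twice this. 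The main obstacle will be the bookkeeping: tracking each coefficient contribution through the nested applications and applying $j, n-j \leq n$ (and $j(j-1) \leq n^2$, etc.) in just the right places so that the final constants match the stated form and, in particular, do not acquire spurious $n$-dependence from the inner applications.
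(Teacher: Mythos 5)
Your overall strategy is the same as the paper's: shift the anchor to $x^{k+j}$, bound $\twonorm{x^{k+n}-x^{k+j}}^2$ by history-sum and distance-to-$x^\star$ terms at that anchor, then transfer back to anchor $x^k$ via $\twonorm{a-x^{k+j}}^2\le 2\twonorm{a-x^k}^2+2\twonorm{x^k-x^{k+j}}^2$ and the bound on $\twonorm{x^{k+j}-x^k}^2$. The difference is that the paper does not nest the \emph{statement} of Lemma~\ref{lem:ybound1}; it reuses the intermediate inequality \eqref{eq:shift-index} from that lemma's proof, in which the history sum runs only over the backward iterates $x^{k-1},\dots,x^{k-n}$, so only two levels of composition occur and the collected coefficient is $2\sigma_{n-1}n^2\tau_1\bigl(1+2\sigma_{n-1}n\tau_1+\sigma_{n-1}n^2\tau_1\bigr)\le 33c^2(1+50c^2)\exp(16c^2)$, with essentially no slack (the $50c^2$ comes from $16.5c^2+33c^2/n\le 49.5c^2$).

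This is where your plan has a genuine gap: the deferred bookkeeping does not close to the stated constant. In your version the inner history sum $\sum_{i=1}^n\twonorm{x^{k+j-i}-x^{k+j}}^2$ contains the forward iterates $x^{k+1},\dots,x^{k+j-1}$, which force a third application of Lemma~\ref{lem:ybound1}. Writing $A=\tfrac{33c^2}{n}e^{8c^2}$, $B=16.5c^2e^{8c^2}$ (so $nA=2B$), your route gives a coefficient on $\tfrac1n\sum_i\twonorm{x^{k-i}-x^k}^2$ of at least
\begin{equation}
n\Bigl(2A+2(j-1)A^2+2nA^2+2AB\tfrac{n-j}{n}\Bigr)\;\approx\;66c^2e^{8c^2}+5445\,c^4e^{16c^2}\quad(\text{worst }j),
\end{equation}
where the $2(j-1)A^2$ term is the extra contribution of the forward iterates and $2nA^2$ comes from the $n$ copies of $\twonorm{x^k-x^{k+j}}^2$. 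The target allows $2\cdot 33c^2(1+50c^2)e^{16c^2}=66c^2e^{16c^2}+3300c^4e^{16c^2}$; matching would require $1-e^{-8c^2}\ge 32.5c^2$, which fails for every $c>0$ since $1-e^{-8c^2}\le 8c^2$. So your argument proves the inequality only with a strictly larger constant of the same form (roughly $33c^2(1+83c^2)\exp(16c^2)$), not the one in the statement. (As an aside, that weaker constant would still suffice for the way the lemma is used in the proof of Theorem~\ref{thm:main}, where it is relaxed to $35c^2$ under $c<1/65$; but as a proof of the lemma as stated it falls short, and the fix is to follow the paper in recursing all the way down to the backward iterates in the first pass rather than invoking Lemma~\ref{lem:ybound1} a third time.)
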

\begin{proof}
Again, apply Lemma~\ref{lem:recursive} recursively $n$ times to get
\begin{align*}
\twonorm{x^{k+n}-x^{k+j}}^2    
&\le
2n\tau_1\sigma_{n-1}
\sum^{n}_{i=1}\twonorm{x^{k-i}-x^{k+j}}^2+
\tau_1n^2\sigma_{n-1}\twonorm{
    x^{\star} -x^{k+j}
    }^2\\
&\le
4n\tau_1\sigma_{n-1}
\sum^{n}_{i=1}\left(
\twonorm{x^{k-i}-x^{k}}^2
+
\twonorm{x^{k}-x^{k+j}}^2
\right)
+
2\tau_1n^2\sigma_{n-1}
\left(
\twonorm{    x^{k} -x^{k+j}    }^2
+
\twonorm{    x^{\star} -x^{k}    }^2
    \right)\\
&\le
4n\tau_1\sigma_{n-1}
\sum^{n}_{i=1}\left(
\twonorm{x^{k-i}-x^{k}}^2
+
2n\tau_1\sigma_{n-1}
\sum^{n}_{i=1}\twonorm{x^{k-i}-x^{k}}^2+
n^2\tau_1\sigma_{n-1}\twonorm{
    x^{\star} -x^{k}
    }^2
\right)
\\
&+
2\tau_1n^2\sigma_{n-1}
\left(
\twonorm{    x^{\star} -x^{k}    }^2
+
2n\tau_1\sigma_{n-1}
\sum^{n}_{i=1}\twonorm{x^{k-i}-x^{k}}^2+
n^2\tau_1\sigma_{n-1}\twonorm{
    x^{\star} -x^{k}
    }^2
    \right)
    \\
&=
4\sigma_{n-1}n\tau_1
(1 + 2\sigma_{n-1}n^2\tau_1 + \sigma_{n-1}n^2\tau_1)
\sum^{n}_{i=1}\twonorm{x^{k-i}-x^{k}}^2
\\
&\quad +
2\sigma_{n-1}n^2\tau_1
(1 +  2\sigma_{n-1} n\tau_1
+
\sigma_{n-1}n^2\tau_1 
) \twonorm{
    x^{\star} -x^{k}
    }^2\\
    &\le
2\sigma_{n-1}n^2\tau_1
(1 +  2\sigma_{n-1} n\tau_1
+
\sigma_{n-1}n^2\tau_1 
) 
\left(\twonorm{
    x^{\star} -x^{k}
    }^2+
    \frac{2}{n}\sum^{n}_{i=1}\twonorm{x^{k-i}-x^{k}}^2
    \right),
\end{align*}
where we plugged in \eqref{eq:shift-index}.
As in Lemma~\ref{lem:ybound1}, choose $\beta=1/n$ and we have
\begin{align*}
&2\sigma_{n-1}n^2\tau_1
(1 +  2\sigma_{n-1} n\tau_1
+
\sigma_{n-1}n^2\tau_1 
) \\
&\le
33c^2\exp(8c^2)
\left(
1+
\frac{33c^2\exp(8c^2)}{n}
+
16.5c^2\exp(8c^2)
\right)\\
&\le
33c^2\exp(16c^2)\left(
1+
16.5c^2
+
\frac{33c^2}{n}
\right)\\
&=
33
c^2(1+50c^2)\exp(16c^2).
\end{align*}
\end{proof}

\begin{lemma}\label{lem:bound_delta}
Define
\[
\Delta^k  = (x^{k+n} - x^k+ n\gamma \nabla f(x^k))/n\gamma.
\]
Let $\gamma=c/(L\sqrt{n(n+1)})$, where $c>0$.
Then
\[
    \twonorm{\Delta^k}^2
    \le
    50c^2\exp\left(8c^2\right)L^2
    \twonorm{
     x^k - x^\star
    }^2
    +
    \left(4 +
    200c^2\exp\left(8c^2\right)
    \right)\frac{L^2}{n}
\sum^{n}_{i=1}\twonorm{x^{k-i}-x^{k}}^2.
\]
\end{lemma}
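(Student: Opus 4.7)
The plan is to start from $n\gamma\Delta^k = x^{k+n}-x^k + n\gamma\nabla f(x^k)$ and telescope
\[
x^{k+n}-x^k = -\gamma\sum_{j=0}^{n-1}\bigl(\nabla f_{[k+j]}(x^{k+j})-\nabla f_{[k+j]}(x^{k+j-n})+\bar{g}^{k+j}\bigr).
\]
Two identities drive the algebra: (i) over one epoch the cyclic indices $[k],\dots,[k+n-1]$ are a permutation of $\{1,\dots,n\}$, so $n\nabla f(x^k)=\sum_{j=0}^{n-1}\nabla f_{[k+j]}(x^k)$; and (ii) reindexing $m=k+j-n$ gives $\sum_{j=0}^{n-1}\nabla f_{[k+j]}(x^{k+j-n})=n\bar{g}^k$. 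Substituting both yields $\Delta^k=A+B$ with
\[
A=\tfrac{1}{n}\textstyle\sum_{j=0}^{n-1}\bigl(\nabla f_{[k+j]}(x^k)-\nabla f_{[k+j]}(x^{k+j})\bigr),\qquad
B=\bar{g}^k-\tfrac{1}{n}\textstyle\sum_{j=0}^{n-1}\bar{g}^{k+j}.
\]

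I would then bound the two pieces separately. Applying \eqref{eq:triangle-like} and $L$-Lipschitzness to $A$ directly gives $\|A\|^2\le \tfrac{L^2}{n}\sum_{j=0}^{n-1}\|x^{k+j}-x^k\|^2$. For $B$, the one-step identity $\bar{g}^{k+j+1}-\bar{g}^{k+j}=\tfrac{1}{n}(\nabla f_{[k+j]}(x^{k+j})-\nabla f_{[k+j]}(x^{k+j-n}))$ lets me rewrite
\[
B=-\tfrac{1}{n^2}\sum_{j=0}^{n-1}(n-1-j)\bigl(\nabla f_{[k+j]}(x^{k+j})-\nabla f_{[k+j]}(x^{k+j-n})\bigr),
\]
so \eqref{eq:triangle-like} together with $(n-1-j)/n\le 1$ yields $\|B\|^2\le \tfrac{L^2}{n}\sum_{j=0}^{n-1}\|x^{k+j}-x^{k+j-n}\|^2$. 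Expanding each $x^{k+j}-x^{k+j-n}$ via \eqref{eq:beta} with $\beta=1$ and reindexing $i=n-j$ so that $\sum_{j=0}^{n-1}\|x^k-x^{k+j-n}\|^2=\sum_{i=1}^{n}\|x^k-x^{k-i}\|^2$, then combining by $\|\Delta^k\|^2\le 2\|A\|^2+2\|B\|^2$, produces the intermediate bound
\[
\|\Delta^k\|^2 \le \tfrac{6L^2}{n}\sum_{j=1}^{n-1}\|x^{k+j}-x^k\|^2 + \tfrac{4L^2}{n}\sum_{i=1}^{n}\|x^k-x^{k-i}\|^2.
\]

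To finish, apply Lemma~\ref{lem:ybound1} to each $\|x^{k+j}-x^k\|^2$ for $j=1,\dots,n-1$ and sum. Since $\sum_{j=1}^{n-1}\tfrac{j}{n}\le\tfrac{n-1}{2}$ and $(n-1)/n\le 1$, this replaces the forward-difference sum by a term of order $c^2\exp(8c^2)\sum_{i=1}^n\|x^{k-i}-x^k\|^2$ plus a term of order $nc^2\exp(8c^2)\|x^\star-x^k\|^2$. After multiplying by $\tfrac{6L^2}{n}$ the $n$ in the second term cancels, and collecting with the already-present $\tfrac{4L^2}{n}\sum_i\|x^{k-i}-x^k\|^2$ yields numerical coefficients $49.5$ and $198$, which round up to the stated $50$ and $200$.

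The main obstacle is the decomposition in the first step: one needs a form of $\Delta^k$ in which Jensen's inequality produces per-summand bounds of the desired shape $\|x^{k+j}-x^k\|^2$ and $\|x^k-x^{k-i}\|^2$ rather than the raw $\|x^{k+j}-x^{k+j-n}\|^2$. The nonobvious ingredient is the cyclic reindexing $\sum_{j=0}^{n-1}\nabla f_{[k+j]}(x^{k+j-n})=n\bar{g}^k$, which has no stochastic analog and reflects that under cyclic order the gradient memory table is refreshed exactly once over every epoch.
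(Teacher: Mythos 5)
Your proposal is correct and is essentially the paper's own argument: your $A+B$ split with weights $(n-1-j)/n$ (obtained from the one-step recursion for $\bar g$) is exactly the paper's rewriting of $x^{k+n}-x^k$ with weights $1-\tfrac{j+1}{n}$, leading to the same intermediate bound $\tfrac{6L^2}{n}\sum_j\|x^{k+j}-x^k\|^2+\tfrac{4L^2}{n}\sum_i\|x^{k-i}-x^k\|^2$. From there both proofs finish identically by invoking Lemma~\ref{lem:ybound1} and $\sum_{j=0}^{n-1}j<n^2/2$, yielding the constants $49.5\le 50$ and $198\le 200$.
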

\begin{proof}
Note that
\begin{align*}
&x^{k+n}-x^k=
\sum^{n-1}_{j=0}
x^{k+j+1}-x^{k+j}\\
&=-\gamma
\sum^{n-1}_{j=0}
\left(
\nabla f_{[k+j]}(x^{k+j})-\nabla f_{[k+j]}(x^{k+j-n})
+\frac{1}{n}\sum^{n}_{i=1}
\nabla f_{[k+j-i]}(x^{k+j-i})
\right)
\\
&=-\gamma
\sum^{n-1}_{j=0}
\left(
\nabla f_{[k+j]}(x^{k+j})
+\left(1-\frac{j+1}{n}\right)
\left(
\nabla f_{[k+j]}(x^{k+j})-\nabla f_{[k+j]}(x^{k+j-n})
\right)
\right).
\end{align*}
Define
\[
\Delta_j^k = \nabla f_{[k+j]}(x^{k+j})-\nabla f_{[k+j]}(x^{k})
+\left(1-\frac{j+1}{n}\right)
\left(
\nabla f_{[k+j]}(x^{k+j})-\nabla f_{[k+j]}(x^{k+j-n})\right).
\]
Then we have 
\[
\Delta^k=-\frac{1}{n}\sum^{n-1}_{j=0}\Delta^k_j.
\]
Using the fact that
$(1-(j+1)/n)\le 1$, we get
\begin{align*}
\twonorm{\Delta^k_j}^2
&\le
2L^2 \twonorm{x^{k+j}-x^k}^2
+
2L^2
\twonorm{x^{k+j}-x^{k+j-n}}^2\\
&\le
2L^2 \twonorm{x^{k+j}-x^k}^2
+
4L^2
\twonorm{x^{k+j}-x^k}^2
+4L^2\twonorm{x^{k+j-n}-x^{k}}^2\\
&=
6L^2 \twonorm{x^{k+j}-x^k}^2
+4L^2\twonorm{x^{k+j-n}-x^{k}}^2\\
&\le
\frac{j}{n}100c^2L^2\exp(8c^2)\twonorm{x^k-x^\star}^2
+200\frac{c^2L^2}{n}\exp(8c^2)\sum^n_{i=1}\twonorm{x^{k-i}-x^k}^2
+4L^2\twonorm{x^{k+j}-x^k}^2.
\end{align*}
So
\begin{align*}
&\|\Delta^k\|^2\le \frac{1}{n}\sum^n_{j=1}\|\Delta_j^k\|^2\\
&\le
50^2L^2\exp(8c^2)\twonorm{x^k-x^\star}^2
+(4+200c^2\exp(8c^2))\frac{L^2}{n}\sum^n_{i=1}\twonorm{x^{k-i}-x^k}^2,
\end{align*}
where we used \eqref{eq:triangle-like} and $\sum_{j=0}^{n-1} j < n^2/2$.
\qed
\end{proof}

\begin{proof}[Theorem~\ref{thm:main}]
We define
\[
x^{k+n}=x^k-n\gamma \nabla f(x^k)+n\gamma\Delta^k
\]
We use \eqref{eq:beta} and \eqref{eq:young} to get
\begin{align*}
\|x^{k+n}-x^\star\|^2
&=
\|x^{k}-x^\star\|^2
-2n\gamma\langle x^k-x^\star,\nabla f(x^k)\rangle
+2n\gamma \langle x^k-x^\star,\Delta^k\rangle\\
&\quad+n^2\gamma^2\|\nabla f(x^k)-\Delta^k\|^2\\
&\le 
\|x^{k}-x^\star\|^2
-2n\gamma\langle x^k-x^\star,\nabla f(x^k)\rangle
+\varepsilon n\gamma \| x^k-x^\star\|^2\\
&\quad+n\gamma/\varepsilon\|\Delta^k\|^2
+2n^2\gamma^2\|\nabla f(x^k)\|^2+2n^2\gamma^2\|\Delta^k\|^2,
\end{align*}
for any $\varepsilon>0$.
Apply \eqref{eq:lem:cocoercivity2}\ to get
\begin{align*}
\twonorm{x^{k+n} - x^\star}^2 
&\le (1-2\gamma n\frac{\mu L}{\mu+L} +  n\gamma \epsilon )\|x^{k}-x^\star\|^2 \\
&\quad-\frac{n\gamma}{L+\mu}
\left(
1-2n\gamma(L+\mu)
\right)
\|\nabla f(x^k)\|^2
+(n\gamma /\epsilon  + 2n^2\gamma^2 )\twonorm{\Delta^k}^2.
\end{align*}
Since $\kappa\ge 1$, we have $c<1/65$.
So $1-2n\gamma(L+\mu)<1$ and we have
\begin{align}
\twonorm{x^{k+n} - x^\star}^2 
&\le (1-2\gamma n\frac{\mu L}{\mu+L} +  \gamma \epsilon )\|x^{k}-x^\star\|^2 
+n\gamma(1 /\epsilon+2n\gamma) \twonorm{\Delta^k}^2 \nonumber
\\
&\le (1- \gamma n \mu +  n\gamma \epsilon )\|x^{k}-x^\star\|^2 
+n\gamma(1 /\epsilon+2n\gamma) \twonorm{\Delta^k}^2, \label{eq:Lp1_bound}
\end{align}
where the last inequality holds due to $\frac{\mu L}{\mu + L} = \frac{\mu}{\kappa^{-1} + 1} \geq \frac{\mu}{2}$.
Using the assumption $c < 1/65$, we simplify Lemma~\ref{lem:bound_delta} into
\begin{align}\label{eq:d_bound}
    \twonorm{\Delta^k}^2
    \le
    52 c^2 L^2
    \twonorm{
     x^k - x^\star
    }^2
    +
    5 \frac{L^2}{n}
    \sum_{j=1}^{n}
    \twonorm{
    x^k -x^{k-j}
    }^2     
\end{align}
and Lemma~\ref{lem:Lp2_bound} into
\begin{align}\label{eq:Lp2_bound}
    \twonorm{x^{k+n} - x^{k+j}}^2
    \leq
    35c^2
    \left(
    \twonorm{
     x^k - x^\star
    }^2
    +
    \frac{2}{n}\sum^n_{j=1}
    \|x^k-x^{k-j}\|^2
    \right).
\end{align}
We plug in \eqref{eq:Lp1_bound}, \eqref{eq:Lp2_bound} and \eqref{eq:d_bound} to have the following
\begin{align*}
V^{k+n} &=
\twonorm{x^{k+n} - x^\star}^2 + \frac{1}{n} \sum_{j=1}^n \twonorm{ x^{k+n} - x^{k+n-j} }^2  
\\&\leq 
\left(1-\gamma n\mu +  n\gamma \epsilon + (n\gamma /\epsilon + 2 n^2\gamma^2)  52 c^2 L^2 + 35c^2 \right)\|x^{k}-x^\star\|^2 
\\ & \quad +
\left( 5 (n\gamma /\epsilon + 2 n^2\gamma^2) L^2 + \frac{70c^2}{n} \right) \frac{1}{n}
\sum_{j=1}^n \twonorm{ x^{k} - x^{k-j} }^2  
\\&\leq 
\left(1-\gamma n\mu \left( 1- \frac{\epsilon}{\mu} - \frac{52 c^2 L^2}{\epsilon \mu} - \frac{104 c^2 n\gamma L^2 }{\mu} - \frac{50cL }{\mu} \right)  \right)\|x^{k}-x^\star\|^2 
\\ & \quad +
\left( 5 (n\gamma /\epsilon + 2 n^2\gamma^2) L^2 + \frac{70c^2}{n} \right) \frac{1}{n}\sum_{j=1}^n \twonorm{ x^{k} - x^{k-j} }^2  
\\& \leq
\left(1-\gamma n\mu \left( 1- \frac{\epsilon}{\mu} - \frac{52 c^2 L \kappa}{\epsilon} - 104 c^3 \kappa   -  50 c \kappa \right) \right)  \|x^{k}-x^\star\|^2 
\\ & \quad +
\left(5 (cL/ \epsilon + 2 c^2) + \frac{70c^2}{n} \right) \frac{1}{n}  \sum_{j=1}^n \twonorm{ x^{k} - x^{k-j} }^2, 
\end{align*}
where the second and third inequality holds due to the fact that $\sqrt{n(n+1)}\le \sqrt{2}n$ and  $n\gamma = \frac{\sqrt{n}}{\sqrt{n+1}L}\leq \frac{c}{L}$.

For the choice of $\epsilon = \sqrt{52}c L$, since 
\begin{align*}
\frac{\epsilon}{\mu} + \frac{52c^2L^2}{\epsilon \mu} = 2\sqrt{52} c \kappa
\end{align*}
holds, the inequality above becomes 
\begin{align*}
V^{k+n}
&\leq 
\left(1-\gamma n\mu ( 1-  2\sqrt{52} c \kappa( 1 + \sqrt{52}c^2 + 3.5 ) )\right)  \|x^{k}-x^\star\|^2 \\
&\qquad+ 
\left(\frac{5}{\sqrt{52}}+10c^2+\frac{70c^2}{n} \right) \frac{1}{n}  \sum_{j=1}^n \twonorm{ x^{k} - x^{k-j} }^2 .
\end{align*}
Since $\kappa\ge 1$, 
\begin{align*}
V^{k+n}
&\leq 
\left(1-\gamma n\mu ( 1-  2\sqrt{52} c \kappa( 1 + \sqrt{52}c^2\kappa^2 + 3.5 ) )\right)  \|x^{k}-x^\star\|^2 \\
&\qquad
+ 
\left(\frac{5}{\sqrt{52}}+10c^2\kappa^2+\frac{70c^2\kappa^2}{n} \right) \frac{1}{n}  \sum_{j=1}^n \twonorm{ x^{k} - x^{k-j} }^2 .
\end{align*}
Under $c\kappa <1/65$, it is simple to check
\[
2\sqrt{52} c \kappa( 1 + \sqrt{52}c^2\kappa^2 + 3.5 ) < 1, 
\]
and 
\[    
\frac{5}{\sqrt{52}}+10c^2\kappa^2+\frac{70c^2\kappa^2}{n}<1.
\]
This gives us a contraction.
For $c\kappa = 1/130$, the contraction factor is 
\begin{align*}
\max\left\{1-\frac{\gamma n \mu}{2}
,
\frac{5}{\sqrt{52}}+
10c^2\kappa^2
+
\frac{10c^2\kappa^2}{n}
\right\}
\le
1-
\frac{1}{368\kappa^2},
\end{align*}
where again we use the fact that $\sqrt{n(n+1)}\le \sqrt{2}n$ and $\kappa \ge 1$.
\qed\end{proof}

\section{Experiments}
\begin{figure}
    \centering
    \includegraphics[scale=0.38]{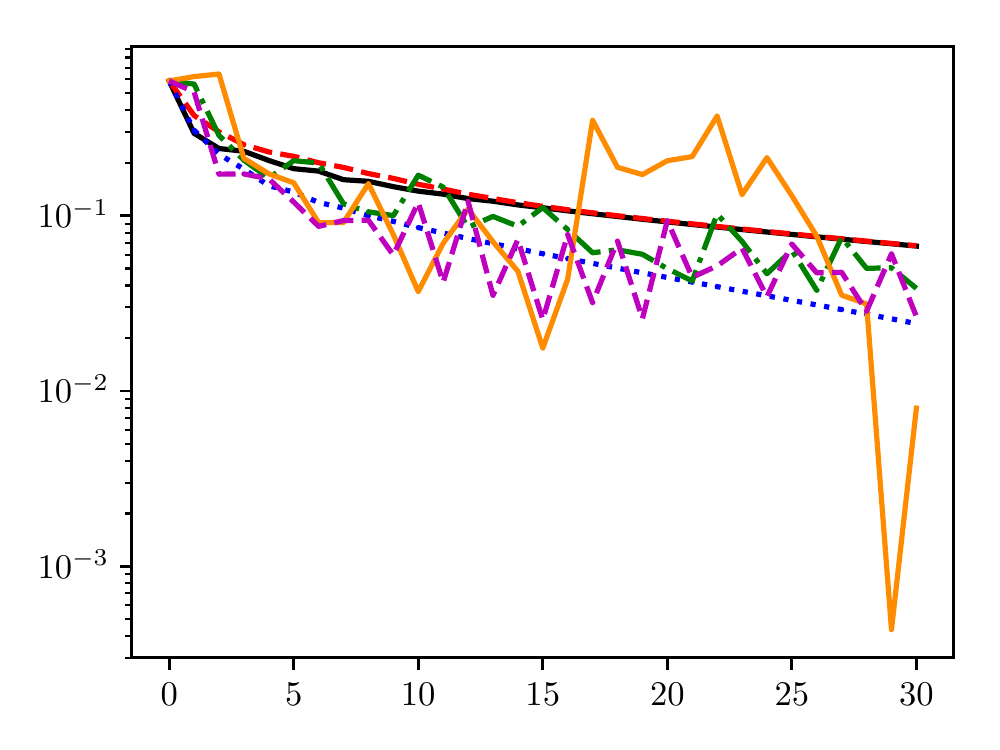}
    \includegraphics[scale=0.38]{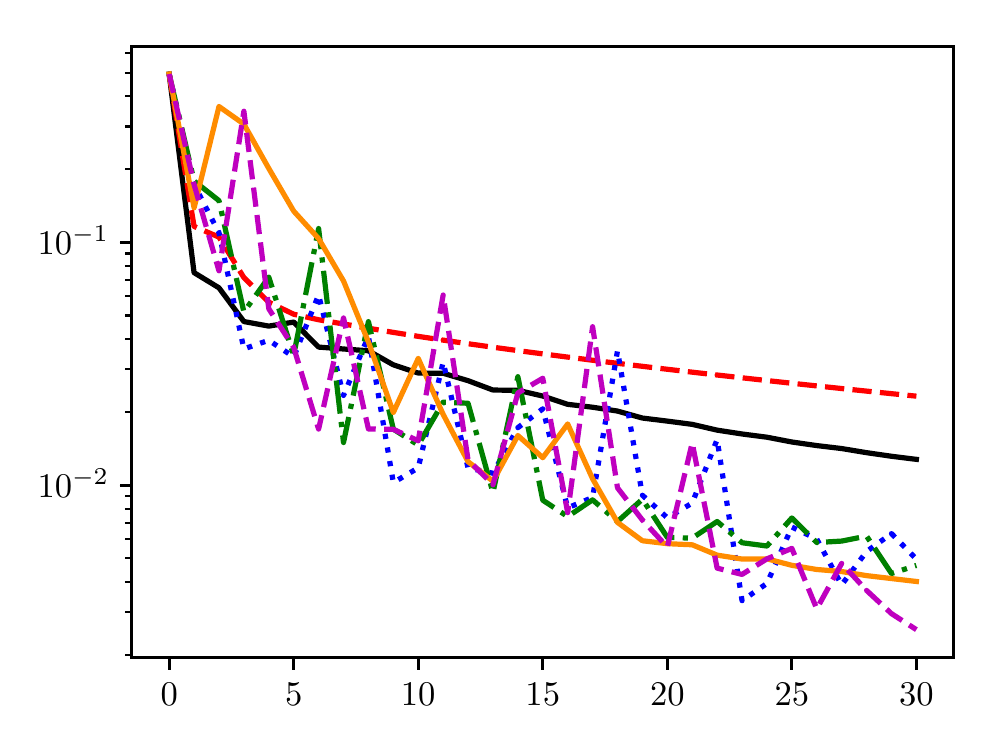}
    \includegraphics[scale=0.38]{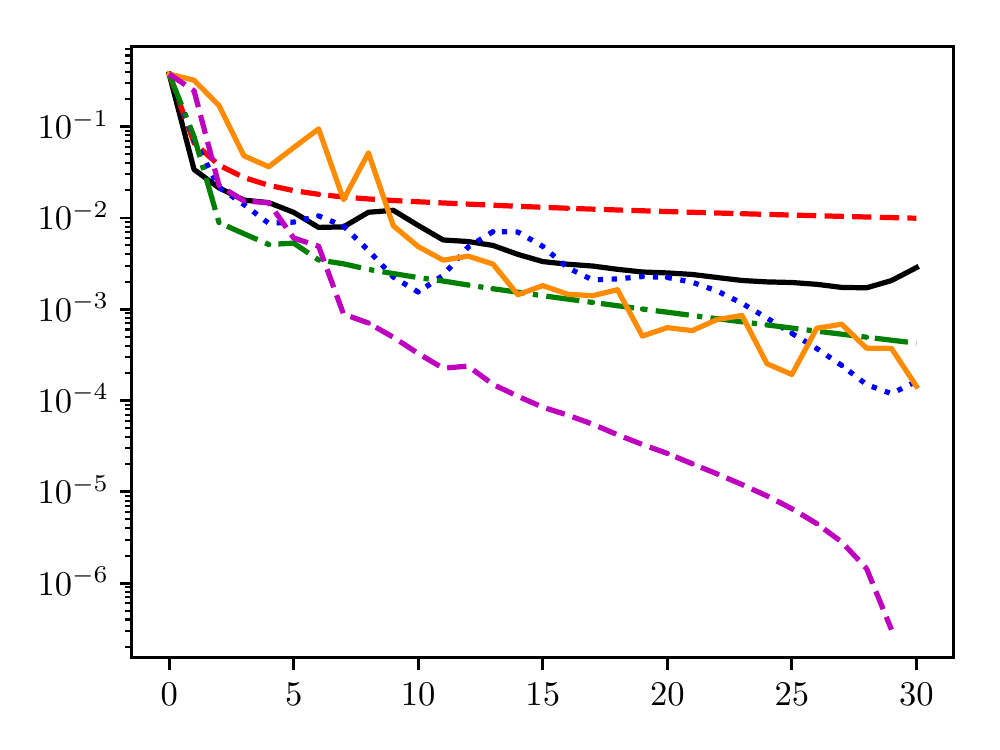}
\\
    \centering
    \includegraphics[scale=0.38]{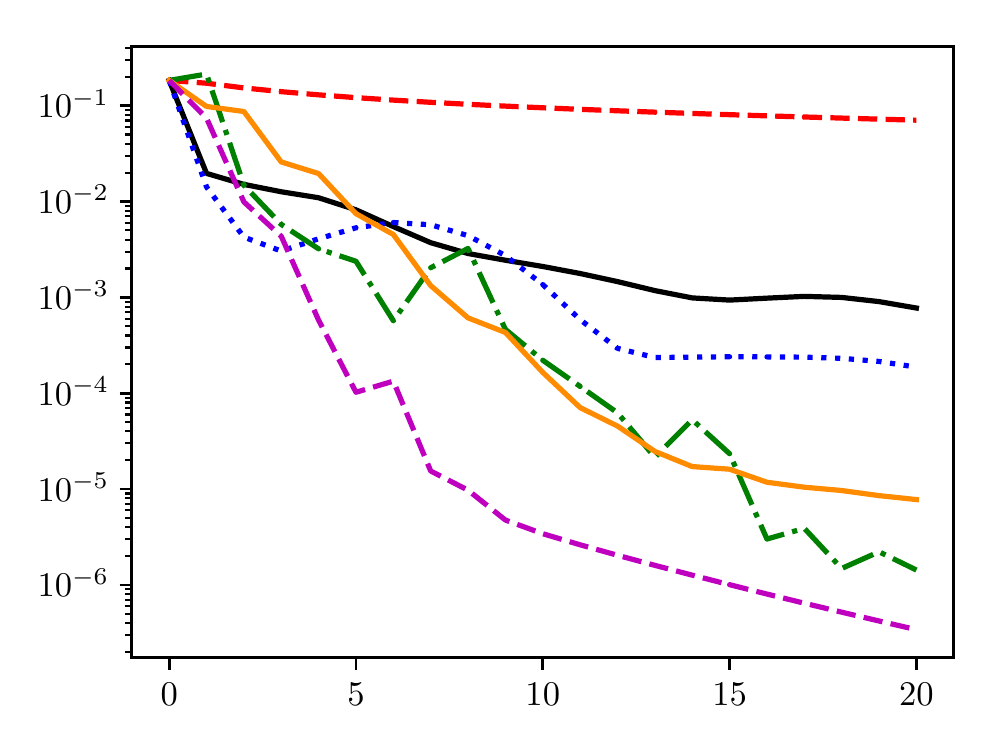}
    \includegraphics[scale=0.38]{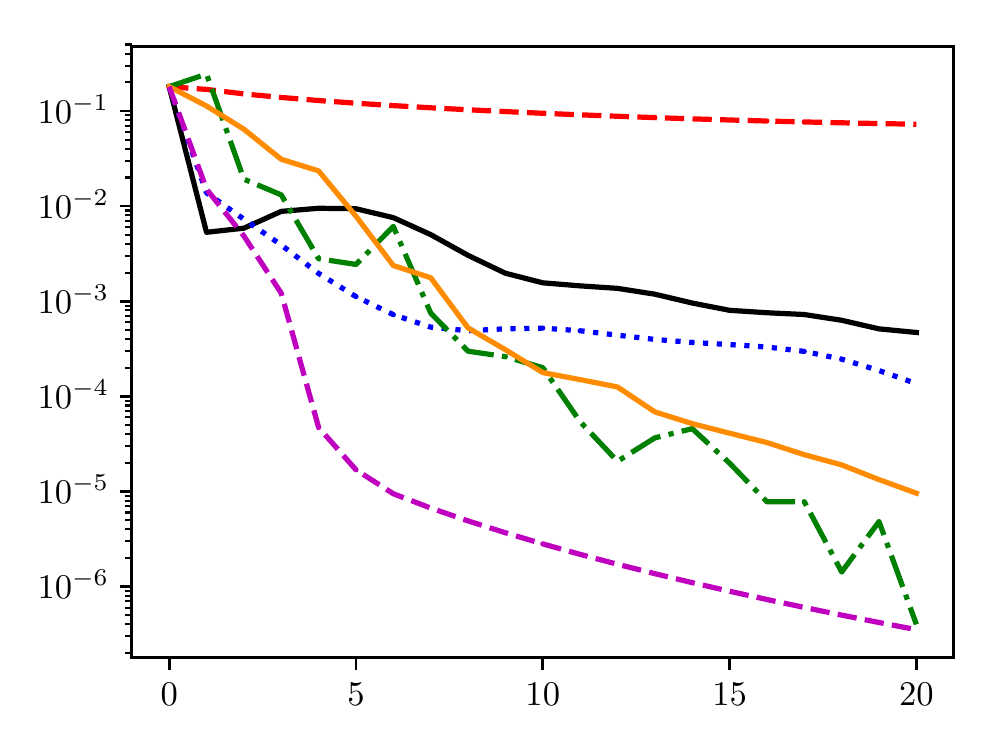}
    \includegraphics[scale=0.38]{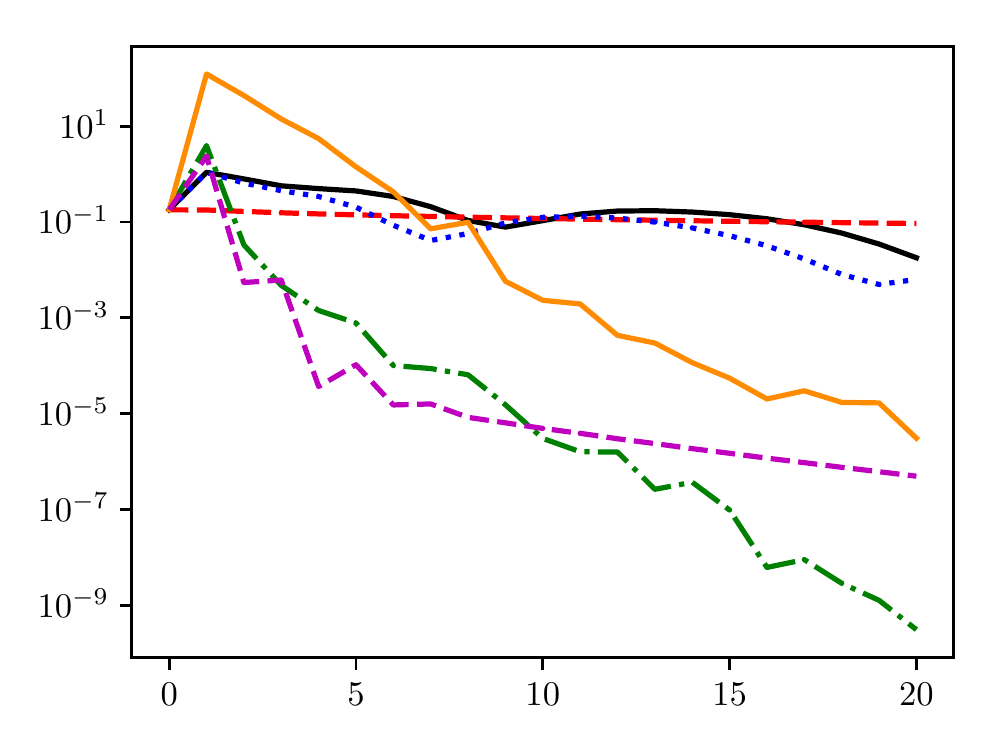}
\\
    \centering
    \includegraphics[scale=0.38]{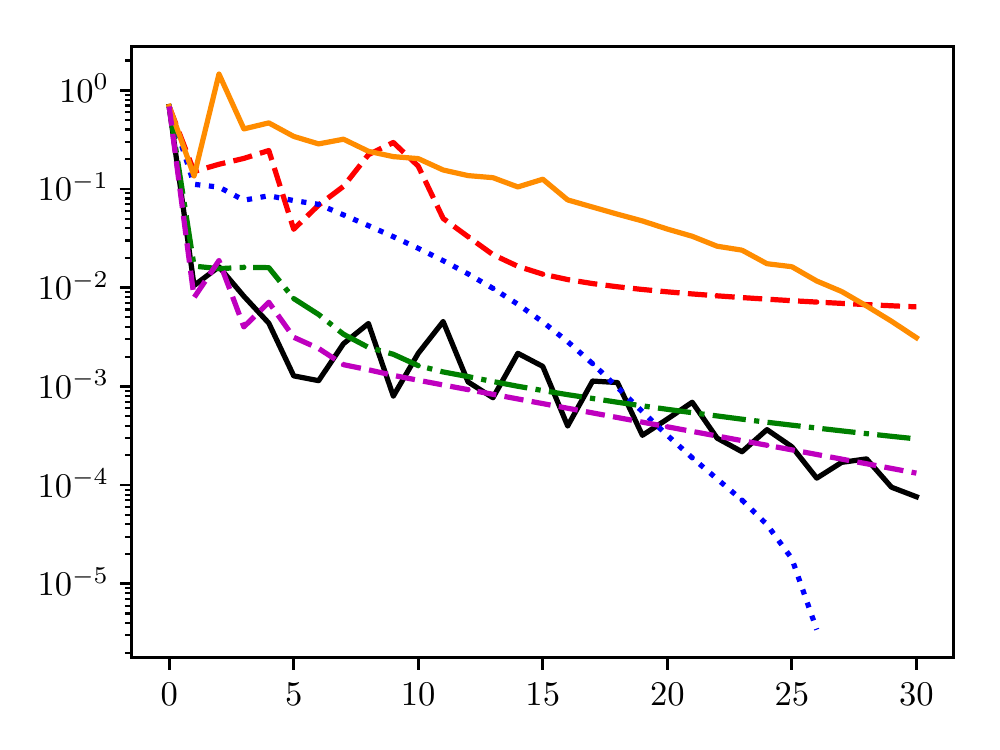}
    \includegraphics[scale=0.38]{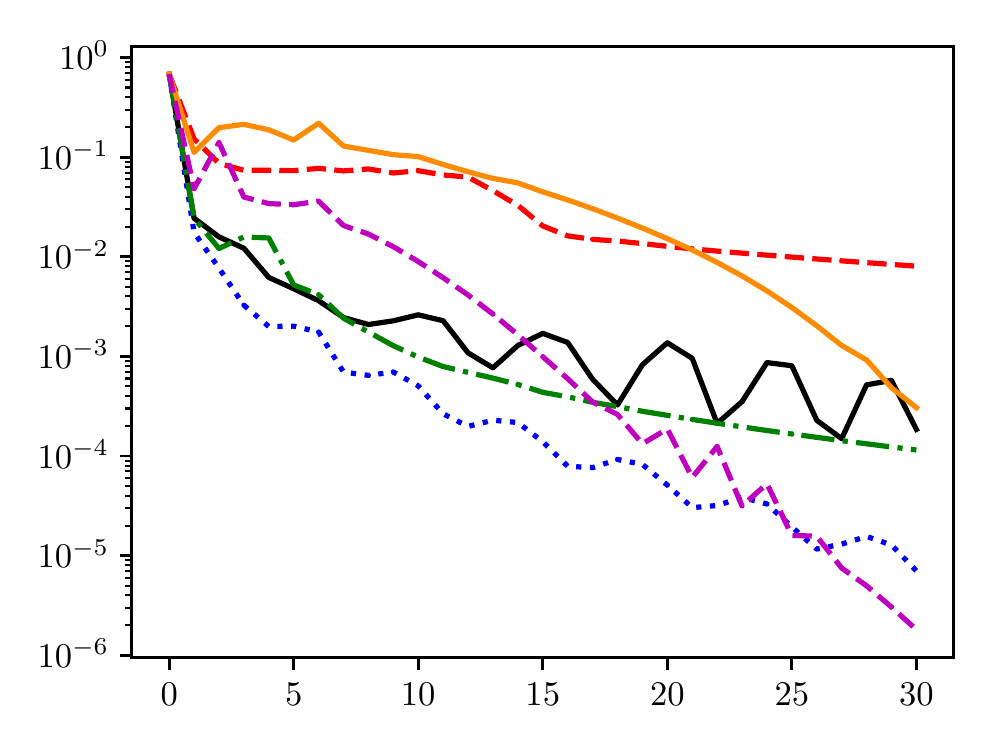}
    \includegraphics[scale=0.38]{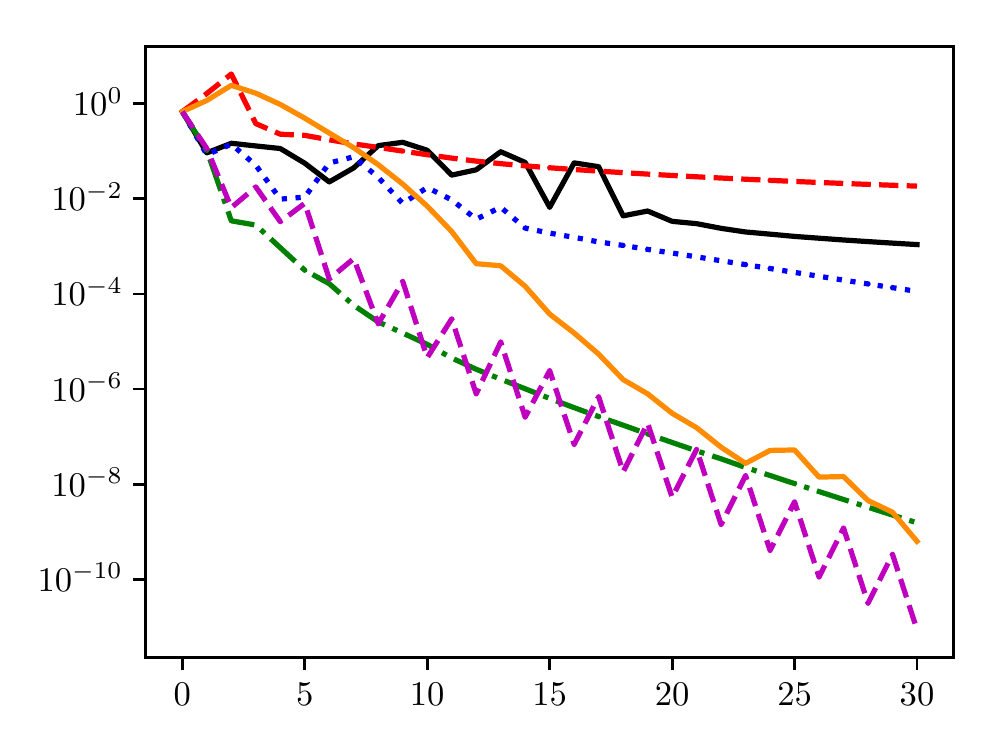}
\\
    \centering
    \includegraphics[scale=0.38]{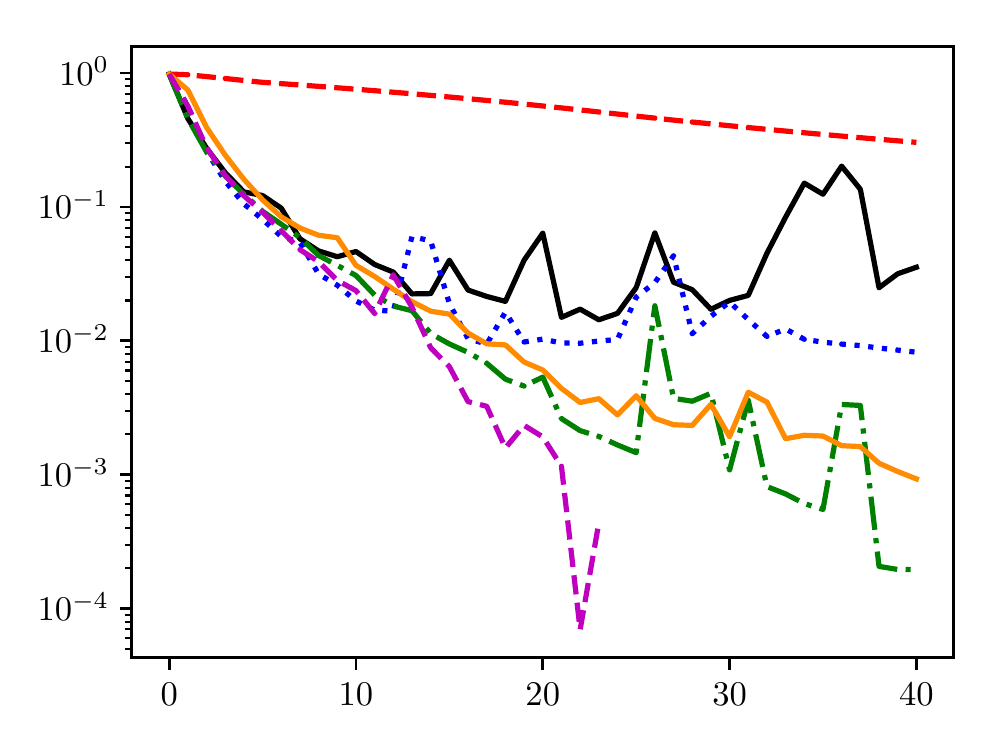}
    \includegraphics[scale=0.38]{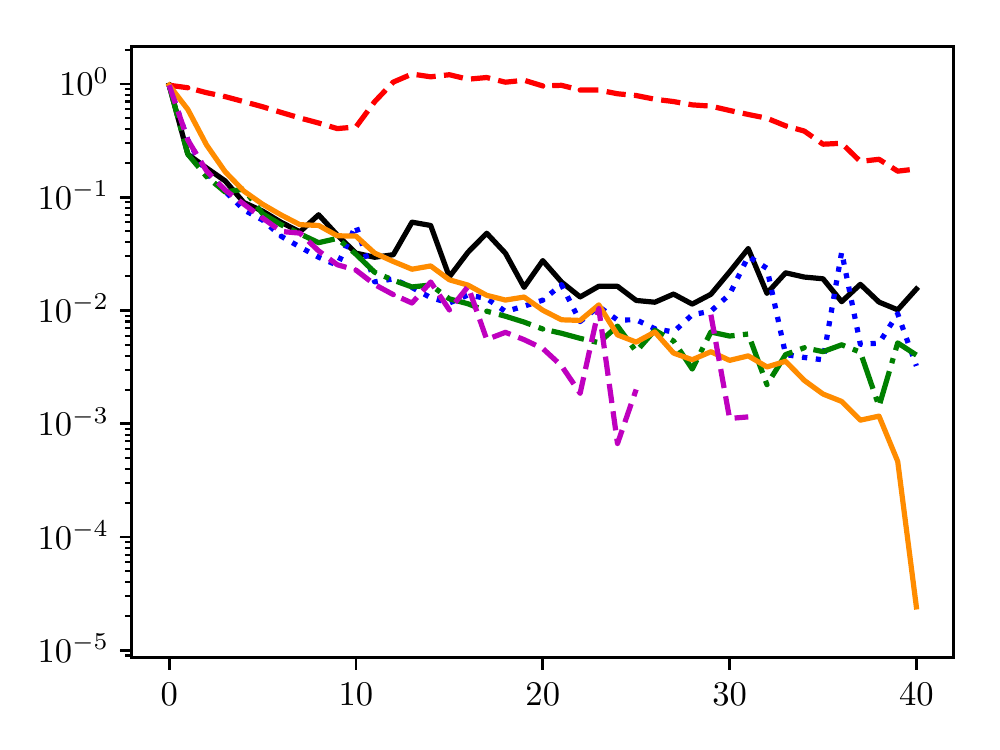}
    \includegraphics[scale=0.38]{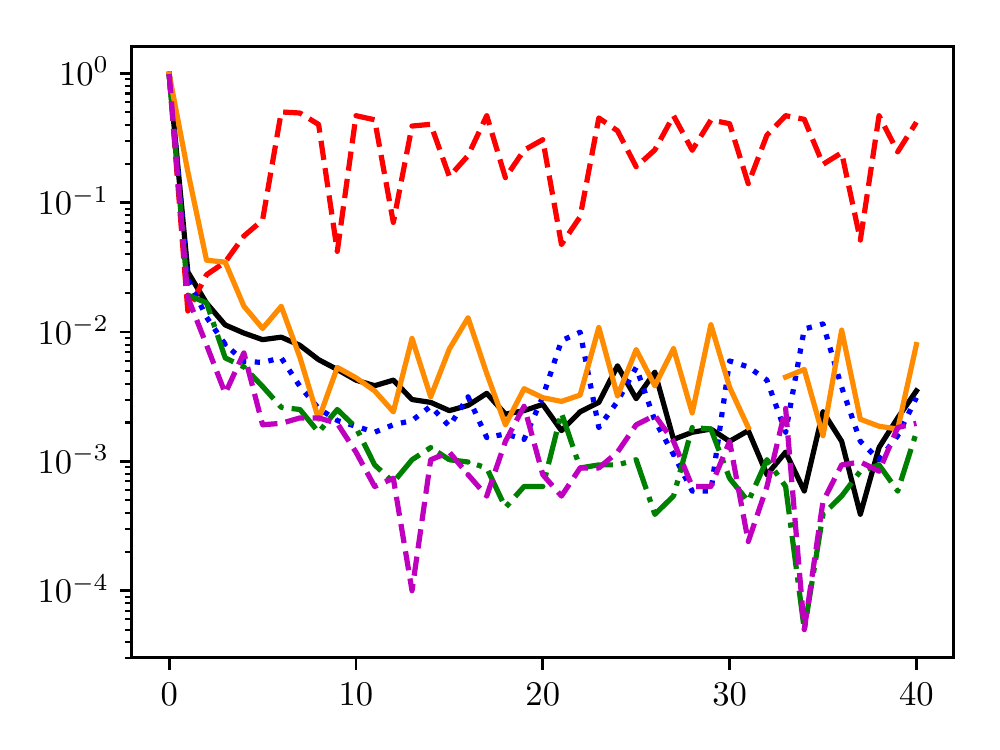}
\\
    \centering
    \includegraphics[scale=0.33]{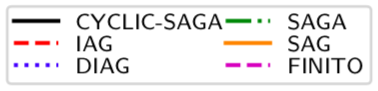}
    \caption{Function suboptimality vs.\ epoch: from top to bottom, the rows corresponds to the AUTRALIAN, COVTYPE, MUSHROOM, and RCV1 dataset. From left to right, each column corresponds to a $5$, $10$, $100$ percent subsampling of the dataset. 
    For the bottom right corner setup (the entire RCV1 dataset) Finito and DIAG took more than $10$ hours, while the other four methods took less than $5$ seconds due to just-in-time updates.}
    \label{fig:rcv1}
\end{figure}

Figure~\ref{fig:rcv1} shows the experimental results.
Overall, we observe that C-SAGA performs better than IAG, which is consistent with the theory.
We also observe that C-SAGA is slower than DIAG in iteration count but is faster in wall clock time due to the acceleration just-in-time updates provide. 
For the RCV dataset with $n=20242$ and $m=47237$, DIAG and Finito (the randomized version of DIAG) took more than $10$ hours while the other methods took less than $5$ seconds. 
For the experiments, we modified Defazio's code \cite{defazio_pointsaga}, which implements SAG and SAGA, but not Finito.
The dataset is a selection of commonly used datasets from the LIBSVM repository \cite{libsvm}.
For IAG, C-SAGA, SAG, and SAGA, we use just-in-time updates (implemented by Defazio) to accelerate the computation as explained in Section 4.1 of \cite{schmidt2016}.
As discussed in Section~\ref{s:proof}, just-in-time updates are not applicable to DIAG and Finito.
For each algorithm, we ran experiments for a wide range of stepsizes from $8192$ to $10^{-4}$ and chose the best one.
The time measurements were taken on a MacBook Air with a 1.3 GHz Intel Core i5 CPU.

\section{Conclusion}
In this work, we analyzed C-SAGA and compared C-SAGA to existing methods, theoretically and experimentally.
As an aside, we observed that the random permutation variant of SAGA outperforms SAGA and C-SAGA. Our experimental results conform with the curiously persistent phenomenon that, among index selection rules, random permutation outperforms IID, which in turn outperforms cyclic. 
Investigating the effect of random permutations on incremental methods and systematically comparing the deterministic and randomized complexity of finite-sum optimization problems is an interesting direction of future work.

\bibliographystyle{spmpsci}      
\bibliography{./refs}

\end{document}